\newcommand{\R}{\mathbb{R}}
\newcommand{\C}{\mathbb{C}}
\newcommand{\Z}{\mathbb{Z}}
\newcommand{\N}{\mathbb{N}}
\newcommand{\F}{\mathrm{F}}
\newcommand{\G}{\mathrm{G}}
\newcommand{\smooth}{\mathscr{C}^{\infty}}
\newcommand{\ic}{\sqrt{-1}}
\newcommand{\bw}{\overline{w}}
\newcommand{\M}{\mathscr{M}_M}
\newcommand{\derpar}[2]{\frac{\partial#1}{\partial#2}}
\newcommand{\e}{\varepsilon}
\newcommand{\E}{\mathbb{E}}
\newcommand{\db}{\bar{\partial}}
\newcommand{\Wedge}{\Lambda^{0,\bullet}}
\newcommand{\End}{\mathrm{End}}
\newcommand{\LL}{\mathscr{L}}
\newcommand{\0}{\mathscr{O}}
\newcommand{\n}{\nabla}
\newcommand{\ol}{\overline}
\newcommand{\wt}[1]{\widetilde{#1}}
\DeclareMathOperator{\Ker}{Ker}
\DeclareMathOperator{\rank}{rk}
\DeclareMathOperator{\Id}{Id} 
\DeclareMathOperator{\tr}{Tr}
\renewcommand{\Im}{\mathrm{Im}}
\newtheorem{theorem}{Theorem}
\newtheorem{lemma}{Lemma}
\theoremstyle{definition}
\newtheorem{definition}{Definition}
\theoremstyle{remark}
\newtheorem{remark}{Remark}
\newtheorem{example}{Example}
\begin{document}

\title{Holomorphic Morse inequalities for orbifolds}

%\titlerunning{Short form of title}        % if too long for running head

\author{Martin Puchol\thanks{ This work was performed within the framework of the LABEX MILYON (ANR-10-LABX-0070) of Universit\'e de Lyon, within the program "Investissements d'Avenir" (ANR-11-IDEX-0007) operated by the French National Research Agency (ANR).}}

%\authorrunning{Short form of author list} % if too long for running head

\maketitle

\begin{abstract}
We prove that Demailly's holomorphic Morse inequalities hold true for complex orbifolds by using a heat kernel method. Then we introduce the class of Moishezon orbifolds and as an application of our inequalties, we give a geometric criterion for a compact connected orbifold to be a Moishezon orbifolds, thus generalizing Siu's and Demailly's answers to the Grauert-Riemenschneider conjecture to the orbifold case.
% \PACS{PACS code1 \and PACS code2 \and more}
\end{abstract}

\section{Introduction}
\label{intro}

Morse Theory investigates the topological information carried by Morse functions on a manifold and in particular their critical points. Let $f$ be a Morse function on a compact manifold of real dimension $n$.  Let $m_j$ $(0 \leq j \leq n)$ be the the number of critical points of $f$ of Morse index  $j$, and let $b_j$ be the Betti numbers of the manifold. Then the strong Morse inequalities state that for $0\leq q \leq n$,
\begin{equation}
\label{MI-reelles}
\sum_{j=0}^q(-1)^{q-j} b_j \leq \sum_{j=0}^q(-1)^{q-j} m_j,
\end{equation}
with equality if $q=n$. From \eqref{MI-reelles}, we get the weak Morse inequalities:
\begin{equation}
b_j\leq m_j \qquad \text{for} \quad 0\leq j\leq n.
\end{equation}

In his seminal paper \cite{MR683171}, Witten  gave an analytic proof of the Morse inequalities by analyzing the spectrum of the Schr\"{o}dinger operator $\Delta_t=\Delta +t^2|df|^2+tV$, where $t>0$ is a real parameter and $V$ an operator of order 0. For $t\to +\infty$, Witten shows that the spectrum of $\Delta_t$ approaches in some sense the spectrum of a sum of harmonic oscillators attached to the critical points of $f$.  

In \cite{demailly}, Demailly established analogous asymptotic Morse inequalities for the Dolbeault cohomology associated with high tensor powers $L^p:=L^{\otimes p}$ of a (smooth) holomorphic Hermitian line bundle $(L,h^L)$ over a (smooth) compact complex manifold $(M,J)$ of dimension $n$.  The inequalities of Demailly give asymptotic bounds on the Morse sums of the Betti numbers of $\db$ on $L^p$ in terms of certain integrals of the Chern curvature $R^L$ of $(L,h^L)$. More precisely,  we define $\dot{R}^L\in \End(T^{(1,0)}M)$ by $g^{TM}(\dot{R}^Lu,\ol{v}) =R^L(u,\ol{v})$ for $u,v\in T^{(1,0)}M$, where $g^{TM}$ is any $J$-invariant Riemannian metric on $TM$. We denote by $M( q)$ the set of points where $\dot{R}^L$ is non-degenerate and exactly $q$ negative eigenvalues, and we set $M(\leq q) = \cup_{j\leq q} M(j)$. Let $n=\dim_\C M$, then we have for $0\leq q \leq n$
\begin{equation}
\label{IM}
\sum_{j=0}^q (-1)^{q-j} \dim H^j(M, L^p) \leq \frac{p^n}{n!} \int_{M(\leq q)} (-1)^q \left( \frac{\ic}{2\pi}R^{L}\right)^{n} +o(p^n),
\end{equation}
with equality if $q=n$. Here $H^j(M,L^p)$ denotes the Dolbeault cohomology in bidegree $(0,j)$, which is also the $j$-th group of cohomology of the sheaf of holomorphic sections of $L^p$. Note that $M(q)$ and $M(\leq q)$ are open subsets of $M$ and do not depend on the metric $g^{TM}$.

These inequalities have found numerous applications. In particular, Demailly used them in \cite{demailly} to find new geometric characterizations of Moishezon spaces, which improve Siu's solution in \cite{MR755233,MR797421} of the Grauert-Riemenschneider conjecture \cite{MR0302938}. Another notable application of the holomorphic Morse inequalities is the proof of the effective Matsusaka theorem by Siu \cite{MR1275204,MR1603616}. Recently, Demailly used these inequalities in \cite{MR2918158} to prove a significant step of a generalized version of the Green-Griffiths-Lang conjecture.

To prove these inequalities, the key remark of Demailly was that in the formula for the Kodaira Laplacian $\square_p$ associated with $L^p$, the metric of $L$ plays formally the role of the Morse function in the paper Witten \cite{MR683171}, and that the parameter $p$ plays the role of the parameter $t$. Then the Hessian of the Morse function becomes the curvature of the bundle. The proof of Demailly was based on the study of the semi-classical behavior as $p\to +\infty$ of the spectral counting functions of $\square_p$. Subsequently, Bismut gave an other proof of  the holomorphic Morse inequalities in \cite{MR886814} by adapting his heat kernel proof of the Morse inequalities \cite{MR852155}. The key point is that we can compare the left hand side of \eqref{IM} with the alternate trace of the heat kernel acting on forms of degree $\leq q$ (see \cite[Theorem 1.4]{MR886814}) :
\begin{equation}
\label{Morse-sum-and-supertrace}
\sum_{j=0}^q (-1)^{q-j} \dim H^j(M, L^p) \leq \sum_{j=0}^q (-1)^{q-j} \tr\left[ e^{-\frac{u}{p}\square_p}|_{\Omega^{0,j}(M,L^p)}\right],
\end{equation}
with equality if $q=n$. Then, Bismut obtained the holomorphic Morse inequalities by showing the convergence of the heat kernel thanks to probability theory. Demailly \cite {MR1128538} and Bouche \cite{MR1056777} gave an analytic approach of this result. In \cite{ma-marinescu}, Ma and Marinescu  gave a new proof of this convergence, replacing the probabilistic arguments of Bismut  \cite{MR886814} by arguments inspired by the analytic localization techniques of Bismut-Lebeau \cite[Chapter 11]{bismut-lebeau}.

When the bundle $L$ is positive, \eqref{IM} is a consequence of the Hirzebruch-Riemann-Roch theorem and of the Kodaira vanishing theorem, and  reduces to
\begin{equation}
\dim H^0(M, L^p)=\frac{p^n}{n!} \int_{M}  \left(\frac{\ic}{2\pi} R^L\right)^{n} +o(p^n).
\end{equation}
 In this case, a local estimate can be obtained by the study of the asymptotic of the Bergman kernel (the kernel of the orthogonal projection from $\smooth(M,L^p)$ onto $H^0(M,L^p)$) when $p\to+\infty$.  We refer to \cite{ma-marinescu} and the reference therein for the study of the Bergman kernel. 

It is a natural question to know whether we can prove a version of Demailly's holomorphic Morse inequalities when $M$ is a complex orbifolds and $L$ is an orbifold bundles. Applying the result of \cite{MR3545500,MR2917156}, one can prove that such inequalities hold if $M$ is the quotient of a CR manifold by a transversal CR $\mathbb{S}^1$-action. In this paper, we prove that Demailly's  inequalities hold for high tensor power of an orbifold line bundle, twisted by another orbifold bundle, on a general compact complex orbifolds. We also introduce the class of Moishezon orbifolds and as an application of our inequalties, we give a geometric criterion for a compact connected orbifold to be a Moishezon orbifold, thus generalizing the above-mentionned Siu's and Demailly's answers to the Grauert-Riemenschneider conjecture to the orbifold case. \\

We now give more details about our results.

Let $M$ be a compact complex orbifold of dimension $n$. We denote the complex structure of $M$ by $J$. Let $(L,h^L)$ a Hermitian holomorphic orbifold line bundle on $M$ and let $(E,h^E)$ be a Hermitian holomorphic orbifold vector bundle on $M$. As we will see in Section \ref{sect-complex-orbifolds}, we may assume without loss of generality that $L$ and $E$ are proper. We denote by $R^L$ the Chern curvature of $(L,h^L)$. We refer to Section \ref{Sect-Orbifold} for the background concerning orbifold, but for this introduction, let us just say that every object on orbifold can be seen locally as being the quotient of an object on a non-singular manifold which is invariant by a finite group, and that we keep the same notation for both these objects.

We define $\dot{R}^L$, $M(q)$ and $M(\leq q)$ exactly as in the non-singular case above.

Let $H^\bullet(M,L^p \otimes E)$ be the orbifold Dolbeault cohomology. Then our holomorphic Morse inequalities for orbifolds have the same statement as the regular ones:

\begin{theorem}
\label{OHMI}
As $p\to +\infty$, the following strong Morse inequalities hold for $q\in \{ 1,\dots, n\}$
 \begin{equation}
 \label{Eq-OHMI}
\sum_{j=0}^q (-1)^{q-j}\dim H^j(M,L^p\otimes E) \leq \rank(E) \frac{p^{n}}{n!}  \int_{M(\leq q)} (-1)^{q} \left(\frac{\ic}{2\pi} R^L\right)^{n} + o(p^{n}),
\end{equation}
with equality for $q=n$.

In particular, we get the weak Morse inequalities
 \begin{equation}
\dim H^q(M,L^p\otimes E)
\leq \rank(E) \frac{p^{n}}{n!}  \int_{M( q)} (-1)^{q} \left(\frac{\ic}{2\pi} R^L\right)^{n}+ o(p^{n}).
\end{equation}

The integrals over an orbifold appearing here are defined by \eqref{defn-integration-eq}.
\end{theorem}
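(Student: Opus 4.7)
The plan is to follow the heat-kernel approach of Bismut \cite{MR886814} and Ma-Marinescu \cite{ma-marinescu}, adapted to the orbifold setting. Orbifold Hodge theory identifies $H^\bullet(M, L^p \otimes E)$ with the kernel of the Kodaira Laplacian $\square_p$ acting on $\Omega^{0,\bullet}(M, L^p \otimes E)$, and the standard linear-algebra argument recalled in the introduction then gives, for every $u > 0$,
\begin{equation*}
\sum_{j=0}^{q}(-1)^{q-j}\dim H^j(M, L^p\otimes E) \leq \sum_{j=0}^{q}(-1)^{q-j}\tr\bigl[e^{-\frac{u}{p}\square_p}|_{\Omega^{0,j}(M,L^p\otimes E)}\bigr],
\end{equation*}
with equality at $q=n$. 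It thus suffices to bound the right-hand side, divided by $p^n$, by the integral on the right of \eqref{Eq-OHMI} up to a term that vanishes as $u \to +\infty$, and to conclude by letting $u \to +\infty$.

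Next I would express the supertrace as an integral over $M$ of the pointwise supertrace of the heat kernel, and exploit the orbifold structure: around $x \in M$ with uniformizing chart $\wt{U}$ and isotropy group $G_x$, the heat kernel on $M$ lifts to the $G_x$-invariant heat kernel $\wt{K}_u^p$ of the lifted Kodaira Laplacian on $\wt{U}$, and
\begin{equation*}
K_u^p(x,x) = \frac{1}{|G_x|}\sum_{g \in G_x} g\cdot \wt{K}_u^p(g^{-1}\tilde{x}, \tilde{x}).
\end{equation*}
Splitting the sum according to whether $g=1$ (untwisted sector) or $g\neq 1$ (twisted sectors) decomposes the supertrace into two pieces that I would treat separately.

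For the untwisted sector, the analysis happens on the smooth manifold $\wt{U}$, so the techniques of \cite{ma-marinescu} (based on the localization methods of \cite[Ch.~11]{bismut-lebeau}) apply verbatim: after rescaling normal coordinates by $1/\sqrt{p}$, the rescaled Kodaira Laplacian converges to a harmonic oscillator on $T_xM$, yielding a pointwise limit of $p^{-n}\wt{K}_u^p(\tilde{x},\tilde{x})$ in terms of an explicit Mehler kernel. Its alternate supertrace in degrees $\leq q$ produces exactly the integrand $\rank(E)(-1)^q(\ic R^L/2\pi)^n/n!$ on $M(\leq q)$, up to a term that vanishes as $u\to +\infty$. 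Uniform-in-$p$ Gaussian off-diagonal estimates on $\wt{K}_u^p$ then provide the dominated-convergence argument needed to pass to the limit under the integral over $M$.

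The main technical challenge is to show that the twisted-sector contributions are $o(p^n)$. For $g\neq 1$ in $G_x$, the fixed-point set $\wt{U}^g$ is a proper complex submanifold of $\wt{U}$, of complex dimension $\ell_g<n$. Away from $\wt{U}^g$, the points $g^{-1}\tilde{x}$ and $\tilde{x}$ are uniformly separated, so Gaussian off-diagonal estimates derived via finite-propagation-speed techniques will force exponential decay in $p$ of $\wt{K}_u^p(g^{-1}\tilde{x},\tilde{x})$. Near $\wt{U}^g$, rescaling the transverse directions by $1/\sqrt{p}$ turns the rescaled Kodaira Laplacian into a harmonic oscillator in the normal variables coupled to the linear action of $g$ on the normal bundle; the corresponding integral scales as $p^{\ell_g}$, hence is $o(p^n)$. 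The hard part will be obtaining uniform-in-$p$ Gaussian estimates for the heat kernel at the shifted diagonal points $(g^{-1}\tilde{x},\tilde{x})$ and then carrying out the rescaling analysis transversally to the fixed loci of arbitrary isotropy elements.
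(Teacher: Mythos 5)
Your proposal follows essentially the same route as the paper: reduce to the heat-kernel supertrace via orbifold Hodge theory and the standard spectral-complex argument (the paper's Theorem \ref{Euler-et-chaleur}), write the orbifold kernel as $\sum_{g\in G_x}g\cdot\wt{K}(g^{-1}\tilde x,\tilde x)$ and split into the untwisted $g=1$ term and the twisted $g\neq 1$ terms, show the twisted contributions are $o(p^n)$ by rescaling transverse to the fixed loci, and conclude by letting $u\to\infty$ with dominated convergence. The one technical point worth spelling out, which you do flag as the hard part, is how the twisted-sector estimate is actually obtained: the paper recenters the $1/\sqrt p$ rescaling at a base point $\wt Z_{1,g}$ that slides along the fixed locus of $g$, after which the model harmonic-oscillator kernel must be evaluated at the pair $(\sqrt p\,g^{-1}\wt Z_{2,g},\sqrt p\,\wt Z_{2,g})$, whose separation grows like $\sqrt p$; this is precisely why the full off-diagonal expansion of Dai--Liu--Ma \cite{MR2215454} is invoked rather than the near-diagonal asymptotics that suffice on $M_{reg}$. (Also, there should be no $1/|G_x|$ prefactor in your kernel formula when the kernel is taken with respect to $dv_M$, though this normalization slip does not affect the estimate.)
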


Of course, if the orbifold $M$ is a non-singular manifold, and if the orbifold bundles $L$ and $E$ are usual bundles, then Theorem \ref{OHMI} coincides with the the inequalities of Demailly. \\

Let us now give the main steps our our proof. We draw our inspiration from the heat kernel method of \cite{MR886814} (see also \cite[Sections. 1.6 and 1.7]{ma-marinescu}), but the main difficulty when compared to this paper is that the singularities of $M$ make it impossible to have an uniform asymptotics for the heat kernel (see Remark \ref{rem-dvpt-non-unif}), and we thus have to work further near the singularities. In particular we have to use the off-diagonal development of the heat kernel proved by Dai-Liu-Ma \cite{MR2215454}. 

Note that in the case where $L$ is positive, similar difficulties arise in the study of the Bergman kernel, and the asymptotic results concerning the heat kernel given below have parallel results for the Bergman kernel on orbifolds \cite{ma-marinescu} (see also \cite{MR2215454}).

Let $g^{TM}$ be a Riemannian metric on $TM$ which is compatible with $J$. Let 
\begin{equation}
\square_p:=\db^{L^p\otimes E}\db^{L^p\otimes E,*}+\db^{L^p\otimes E,*}\db^{L^p\otimes E}
\end{equation}
 be the Kodaira Lpalacian acting on  $\Omega^{0,\bullet}(M,L^p\otimes E)$ associated with $L^p\otimes E$, $h^L$, $h^E$ and $g^{TM}$ (see the beginning of Section \ref{cvce-noyau-Sect} for details). Note that the operator $\square_p$  preserves the $\Z$-grading. We denote by $\tr_q[e^{-\frac{u}{p}\square_p}]$ the trace of $e^{-\frac{u}{p}\square_p}$ acting on $\Omega^{0,q}(M,L^p\otimes E)$. We then have an analogue of \eqref{Morse-sum-and-supertrace}:
\begin{theorem}
\label{Euler-et-chaleur}
 For any $u>0$, $p\in \N^*$ and $0\leq q \leq n$, we have
 \begin{equation}
 \label{Euler-et-chaleur-eq}
\sum_{j=0}^q (-1)^{q-j} \dim H^j(M,L^p\otimes E) \leq \sum_{j=0}^q (-1)^{q-j} \tr_j\big[e^{-\frac{u}{p}\square_p}\big],
\end{equation}
with equality for $q=n$.
\end{theorem}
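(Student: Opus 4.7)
The statement is the orbifold analogue of the classical Bismut-type inequality used in the smooth setting (cf.~\cite{MR886814} and \cite[Sect.~1.7]{ma-marinescu}). My plan is to show that once the orbifold versions of elliptic Hodge theory and of the spectral decomposition of $\square_p$ are in place, the argument is purely spectral and algebraic and carries over essentially verbatim from the smooth case.

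First I would set up the spectral theory on the orbifold. The Kodaira Laplacian $\square_p$ is a formally self-adjoint, non-negative, second-order elliptic operator acting on sections of the orbifold vector bundle $\Wedge \otimes L^p \otimes E$ over the compact orbifold $M$. The orbifold version of elliptic theory (going back to Kawasaki and used extensively in the orbifold chapter of \cite{ma-marinescu}) yields a discrete spectrum $0\le \lambda_0\le \lambda_1\le \cdots\to +\infty$ on each $\Omega^{0,j}(M,L^p\otimes E)$, finite-dimensional $\lambda$-eigenspaces $V^j_\lambda$, a trace-class heat operator $e^{-(u/p)\square_p}$ with smooth kernel, and the Hodge isomorphism
\begin{equation*}
\Ker\bigl(\square_p|_{\Omega^{0,j}(M,L^p\otimes E)}\bigr) \simeq H^j(M,L^p\otimes E).
\end{equation*}
These facts can be verified on each uniformizing chart $\widetilde{U}/G$, where $\square_p$ lifts to a $G$-invariant elliptic operator on $\widetilde{U}$ and the global analysis reduces to the $G$-invariant part of the classical picture.

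Next I would use the fact that $\db$ commutes with $\square_p$, so that each $(V^\bullet_\lambda,\db)$ is a finite-dimensional subcomplex of $(\ahf(M,L^p\otimes E),\db)$; for $\lambda>0$ this subcomplex is exact, because any $\db$-closed $\alpha\in V^q_\lambda$ equals $\db(\lambda^{-1}\db^{L^p\otimes E,*}\alpha)$. The key algebraic input is then the elementary identity
\begin{equation*}
\sum_{j=0}^q (-1)^{q-j}\dim C^j = \sum_{j=0}^q (-1)^{q-j}\dim H^j(C^\bullet) + \dim\mathrm{Im}(d:C^q\to C^{q+1}),
\end{equation*}
valid for any cochain complex $(C^\bullet,d)$ of finite-dimensional vector spaces; it follows from rank-nullity and a telescoping sum over the coboundary dimensions, and bounds the cohomological alternating sum from above, with equality whenever $C^{q+1}=0$.

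Assembling everything, I would expand
\begin{equation*}
\tr_j\bigl[e^{-(u/p)\square_p}\bigr] = \dim H^j(M,L^p\otimes E) + \sum_{\lambda>0} e^{-(u/p)\lambda}\dim V^j_\lambda,
\end{equation*}
interchange the sums in $j$ and $\lambda$ (legitimate by absolute convergence for $u>0$), and apply the identity above to each subcomplex $(V^\bullet_\lambda,\db)$ with $\lambda>0$, whose cohomology vanishes. This gives
\begin{equation*}
\sum_{j=0}^q (-1)^{q-j}\tr_j\bigl[e^{-(u/p)\square_p}\bigr] - \sum_{j=0}^q (-1)^{q-j}\dim H^j(M,L^p\otimes E) = \sum_{\lambda>0} e^{-(u/p)\lambda}\dim\mathrm{Im}\bigl(\db|_{V^q_\lambda}\bigr) \geq 0,
\end{equation*}
with equality at $q=n$ since then $V^{n+1}_\lambda=0$. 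The only part of the argument requiring genuine care is the orbifold Hodge theorem and the basic spectral theory of $\square_p$; once these are granted, the combinatorial half of the proof is identical to the smooth case, so there is no real obstacle here beyond checking the orbifold analytic prerequisites.
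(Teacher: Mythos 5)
Your proposal is correct and follows essentially the same route as the paper: decompose $\Omega^{0,\bullet}$ into the eigenspace subcomplexes of $\square_p$, use Hodge theory for $\lambda=0$, observe exactness for $\lambda>0$ via $\alpha=\lambda^{-1}\db\db^{*}\alpha$, and combine the resulting algebraic (in)equality for truncated Euler characteristics with the spectral expansion of $\tr_j[e^{-(u/p)\square_p}]$. The only cosmetic difference is that you state the telescoping identity for a general finite complex rather than specializing immediately to the exact case, which is merely a reformulation of the paper's equation \eqref{sum-dim(Flambdaj)}.
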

  
From this result, we will prove our inequalities \eqref{Eq-OHMI} by studying the asymptotics of the heat kernel. We begin by its behavior away from the singularities of $M$.
  
Let $\{w_j\}_{j=1}^n$ be a local  orthonormal frame of $T^{(1,0)}M$ (with the metric induced by $g^{TM}$) with dual frame $\{w^j\}_{j=1}^n$. Set
 \begin{equation}
\omega_d = -\sum_{k,\ell} R^{L} (w_k,\bw_\ell) \bw^\ell\wedge i_{\bw_k} \in \End(\Wedge (T^*M)).
\end{equation}
  
  For compactness, in the sequel we will write
  \begin{equation}
  \label{def-Lim-Eq}
\mathcal{L}im_u(x)= \frac{1}{(2\pi)^{n}}\frac{\det(\dot{R}_{x}^{L})e^{u\omega_{d,x}}}{\det\big(1-\exp(-u\dot{R}_{x}^{L})\big)}\otimes \Id_{E_x},
\end{equation}
with the convention that if an eigenvalue of $\dot{R}^{L}_{x}$ is zero, then its contribution to the term $\frac{\det(\dot{R}_{x}^{L})}{\det\big(1-\exp(-u\dot{R}_{x}^{L})\big)}$ is $\frac{1}{u}$. 

Let $M_{reg}$ be the regular part of $M$ (see Section \ref{Sect-Orbifold}) and let $M_{sing}=M\setminus M_{reg}$ be the singular part of $M$.
\begin{theorem}
\label{asymp-away-sing-Thm}
 For $K\subset M_{reg}$ compact, $u>0$ and $\ell\in \N$, there exists $C>0$ such that for any $x\in K$, we have as $p\to+\infty$
 \begin{equation}
\left|p^{-n}e^{-\frac{u}{p}\square_p}(x,x)-\mathcal{L}im_u(x)\right|_{\mathscr{C}^\ell}\leq Cp^{-1/2}.
\end{equation}
Here, $|\cdot|_{\mathscr{C}^\ell}$ denotes the $\mathscr{C}^\ell$-norm.
\end{theorem}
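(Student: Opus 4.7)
The plan is to reduce the theorem, via finite propagation speed, to the smooth-manifold heat-kernel asymptotics of Dai-Liu-Ma \cite{MR2215454} and Ma-Marinescu \cite{ma-marinescu}. Since $K$ is compact and contained in the open set $M_{reg}$, there exists $\delta>0$ such that the $\delta$-neighborhood of $K$ lies entirely in $M_{reg}$.

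First, I would use finite propagation speed for the wave equation associated with $\sqrt{\square_p}$ to localize the heat kernel: writing $e^{-\frac{u}{p}\square_p}$ as a Fourier transform of $\cos(s\sqrt{\square_p})$ against a Gaussian and truncating the Gaussian outside $[-\delta,\delta]$, one obtains that the truncated operator agrees with $e^{-\frac{u}{p}\square_p}$ on $K$ up to an error that is $O(p^{-\infty})$ in any $\mathscr{C}^\ell$-norm. After this localization, $p^{-n}e^{-\frac{u}{p}\square_p}(x,x)$ depends, modulo negligible terms, only on the geometry inside a $\delta$-ball around $x$, which is smooth; in particular it can be computed using any smooth extension of the local data, so that the orbifold structure becomes irrelevant.

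Next, for each $x\in K$, I would introduce $J$-compatible normal coordinates $Z\in\R^{2n}$ centered at $x$, trivialize $L$ and $E$ along radial geodesics, and perform the standard rescaling $Z\mapsto Z/\sqrt{p}$. The rescaled Kodaira Laplacian $\mathscr{L}_p^t$ with $t=1/\sqrt{p}$ converges to a model harmonic oscillator $\mathscr{L}_0$ on $\C^n$ built from $\dot R^L_x$ and $\omega_{d,x}$, as in \cite[Chapter 4]{ma-marinescu}. A Taylor expansion of the coefficients of $\mathscr{L}_p^t$ in powers of $t$, combined with the Sobolev-norm resolvent estimates of \cite{MR2215454}, yields $|p^{-n}e^{-\frac{u}{p}\square_p}(x,x)-e^{-u\mathscr{L}_0}(0,0)|_{\mathscr{C}^\ell}=O(p^{-1/2})$ with constants uniform in $x\in K$.

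Finally, the diagonal $e^{-u\mathscr{L}_0}(0,0)$ is computed explicitly by Mehler's formula and equals $\mathcal{L}im_u(x)$, which produces the stated asymptotic. The main obstacle is the uniformity of the estimates in $x\in K$ and the control on the $\mathscr{C}^\ell$-derivatives in the base point; compactness of $K$ together with $K\cap M_{sing}=\emptyset$ ensure that the smooth-manifold techniques of \cite{MR2215454,ma-marinescu} apply locally without modification. The genuine orbifold difficulty will appear only in a subsequent step, near $M_{sing}$, where the local universal covers and their finite group actions will have to enter the analysis.
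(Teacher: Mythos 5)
Your proposal follows essentially the same route as the paper's proof: finite propagation speed to localize, normal-coordinate rescaling to a model harmonic oscillator, the Dai--Liu--Ma convergence estimates for the rescaled operator, and Mehler's formula to identify the model kernel with $\mathcal{L}im_u(x)$. The paper derives the regular-point case by specializing its general orbifold localization machinery to the situation $G_{x_0}=\{1\}$ (so that the sum over $G_{x_0}$ in the kernel formula reduces to a single term), which is the same observation you encode by choosing a $\delta$-neighborhood of $K$ inside $M_{reg}$.
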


\begin{remark}
 In fact, the bound in the right hand side in Theorem \ref{asymp-away-sing-Thm} can be improved to $p^{-1}$ using the same reasoning as in \cite[Section 4.2.4]{ma-marinescu} (see also \cite{MR2215454}). However, we will not need this improvement and we leave it to the reader.
\end{remark}

We now turn to the asymptotic behavior of the heat kernel near the singularities. This is the main technical innovation of this paper.

Let $\n^L$ and $\n^E$ be the Chern connections of $(L,h^L)$ and $(E,h^E)$, i.e., the unique connections preserving both the holomorphic and Hermitian structures.

 Let $x_0\in M_{sing}$, from Section \ref{Sect-Orbifold} (and in particular Lemma \ref{Lem-carte-lineaire}), we know that we can identify an open neighborhood $U_{x_0}$ of $x_0$ to $\wt{U}_{x_0}/G_{x_0}$, where $\wt{U}_{x_0} \subset \C^n$ is an open neighborhood of 0 on which the finite group $G_{x_0}$ acts linearly and effectively. In this chart, $x_0$ correspond to the class $[0]$. Then the metric $g^{TM}$ induces a $G_{x_0}$-invariant metric on $\wt{U}_{x_0}$.
 
 Let $\wt{U}_{x_0}^g$ be the fixed point-set of $g\in G_{x_0}$, and let $\wt{N}_{x_0,g}$ be the normal bundle of $\wt{U}_{x_0}^g$ in $\wt{U}_{x_0}$. For each $g\in G_{x_0}$, the exponential map $Y\in (\wt{N}_{x_0,g})_{\tilde{x}} \mapsto \exp_{\tilde{x}}^{\wt{U}_{x_0}}(Y)$ identifies a neighborhood of $\wt{U}_{x_0}^g$ to $\wt{W}_{x_0,g}=\{Y\in \wt{N}_{x_0,g} \: : \: |Y| \leq \e \}$. We identify $L|_{\wt{W}_{x_0,g}}$ and $E|_{\wt{W}_{x_0,g}}$ to $L|_{\wt{U}_{x_0}^g}$ and $E|_{\wt{U}_{x_0}^g}$ by using the parallel transport (with respect to $\n^L$ and $\n^E$) along the above exponential map. Then the action of $g$ on $L|_{\wt{W}_{x_0,g}}$ is the multiplication by $e^{i\theta_g}$, and $\theta_g$ is locally constant on $\wt{U}_{x_0}^g$. Likewise, the action of $g$ on $E|_{\wt{W}_{x_0,g}}$ is given by $g^E\in \smooth(\wt{U}_{x_0}^g,\End(E))$, and $g^E$ is parallel with respect to $\n^E$. 
 
 If $\wt{Z}\in \wt{W}_{x_0,g}$, we write $\wt{Z}=(\wt{Z}_{1,g},\wt{Z}_{2,g})$ with $\wt{Z}_{1,g} \in \wt{U}^g_{x_0}$ and $\wt{Z}_{2,g}\in \wt{N}_{x_0,g}$.
 
On $\wt{U}_{x_0}$, we have two metrics: the first is the $G$-invariant lift $\wt{g^{TM}}$ of $g^{TM}|_{U_{x_0}}$ and the second is the constant metric $(\wt{g^{TM}})_{\wt{Z}=0}$. Let $dv_{\wt{M}}$ and $dv_{\wt{TM}}$ be the associated volume form, and let $\wt{\kappa}$ be the smooth positive function defined by
\begin{equation}
\label{def-kappa-intro}
dv_{\wt{M}}(\wt{Z}) = \wt{\kappa}(\wt{Z}) dv_{\wt{TM}}(\wt{Z}),
\end{equation}
with $\wt{\kappa}(0)=1$.
 
 For $x\in U_{x_0}$, the $G$-invariant lift of $\dot{R}^{L}_{x}$ acts on $T^{1,0}\wt{U}_{x_0}$, and we extend it to $T\wt{U}_{x_0}\otimes \C = T^{1,0}\wt{U}_{x_0}\oplus T^{0,1}\wt{U}_{x_0}$ by setting $\dot{R}^{L}_{x}\ol{\wt{v}}=-\ol{\dot{R}^{L}_{x} \wt{v}}$. We then define
 \begin{equation}
 \label{def-E-Eq}
\mathcal{E}_{g,x}(u,\wt{Z})=
\exp\Bigg\{-\bigg\langle\frac{\dot{R}^L_{x}/2}{\mathrm{th}(u\dot{R}^L_{x}/2)} \wt{Z},\wt{Z} \bigg\rangle+\bigg\langle\frac{\dot{R}_{x}^{L}/2}{\mathrm{sh}(u\dot{R}_{x}^{L}/2)} e^{u\dot{R}_{x}^{L}/2} g^{-1}\wt{Z},\wt{Z} \bigg\rangle\Bigg\}.
\end{equation}
Here again, we use the convention that if an eigenvalue of $\dot{R}^{L}_{x}$ is zero, then the contribution of the associated eigenspace to $\mathcal{E}_{g,x}(u,\wt{Z})$ is of the form $(u,\wt{V})\mapsto e^{-\frac{1}{2u}|g^{-1}\wt{V}-\wt{V}|^2}$.

\begin{theorem}
\label{asymp-near-sing-Thm}
 On $\wt{U}_{x_0}$, for $u>0$ and $\ell\in \N$, there exist $c,C>0$ and $N\in \N$ such that for any $|\wt{Z}|<\e/2$, as $p\to+\infty$
\begin{multline}
\label{asymp-near-sing-Eq}
 \Bigg|p^{-n}e^{-\frac{u}{p}\square_p}(\wt{Z},\wt{Z})-\mathcal{L}im_u(\wt{Z})\\
 -\sum_{\substack{g\in G_{x_0} \\ g\neq 1}}e^{ip\theta_g}g^{E}(\wt{Z}_{1,g})\kappa^{-1}_{\wt{Z}_{1,g}}(\wt{Z}_{2,g}) \mathcal{L}im_u(\wt{Z}_{1,g})\mathcal{E}_{g,\wt{Z}_{1,g}}(u,\sqrt{p}\wt{Z}_{2,g})\Bigg|_{\mathscr{C}^\ell}\\
 \leq Cp^{-1/2}+ Cp^{\frac{\ell-1}{2}}(1+\sqrt{p}\,d(Z,M_{sing}))^Ne^{-cp\,d(Z,M_{sing})^2}.
\end{multline}
\end{theorem}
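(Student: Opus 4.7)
The plan is to lift $e^{-\frac{u}{p}\square_p}$ to the chart $\wt{U}_{x_0}$. After extending $(L,h^L)$, $(E,h^E)$ and $g^{TM}$ $G_{x_0}$-equivariantly to a smooth compact complex manifold (all claims being local, this is harmless), we denote by $\wt{\square}_p$ the resulting Kodaira Laplacian on the cover and by $\wt{e}^u_p$ its heat kernel at time $u/p$. The orbifold heat kernel admits the standard representation
\begin{equation*}
e^{-\frac{u}{p}\square_p}(\wt{Z},\wt{Z})\;=\;\sum_{g\in G_{x_0}} g\cdot\wt{e}^u_p(g^{-1}\wt{Z},\wt{Z}),
\end{equation*}
where $g$ acts on $(L^p\otimes E)_{\wt{Z}}$ via the identifications of Section~\ref{Sect-Orbifold}. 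It thus suffices to estimate each of the $|G_{x_0}|$ summands in $\mathscr{C}^\ell$.

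For the term $g=1$, the off-diagonal heat kernel expansion of Dai-Liu-Ma~\cite{MR2215454} (as used in \cite{ma-marinescu}) applied to $\wt{e}^u_p$ at the base point $\wt{Z}$ yields $p^{-n}\wt{e}^u_p(\wt{Z},\wt{Z})=\mathcal{L}im_u(\wt{Z})+O(p^{-1/2})$ in $\mathscr{C}^\ell$, uniformly in $\wt{Z}\in\wt{U}_{x_0}$; this contributes the first main term in \eqref{asymp-near-sing-Eq}. For $g\neq 1$, since $g$ acts linearly on $\wt{U}_{x_0}$ and $1-g$ is invertible on $\wt{N}_{x_0,g}$, we have $d(\wt{Z},g^{-1}\wt{Z})\geq c_g\,d(\wt{Z},\wt{U}_{x_0}^g)\geq c_g\,r$ where $r=d(Z,M_{sing})$, using that $\wt{U}_{x_0}^g$ projects into $M_{sing}$ for $g\neq 1$. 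Gaussian heat-kernel bounds of Bismut-Lebeau type (cf.~\cite{ma-marinescu}) then give, for any $\ell\in\N$,
\begin{equation*}
\bigl|p^{-n}\wt{e}^u_p(g^{-1}\wt{Z},\wt{Z})\bigr|_{\mathscr{C}^\ell}\leq C\,p^{(\ell-1)/2}(1+\sqrt{p}\,r)^N e^{-c p r^2},
\end{equation*}
which both handles the ``far'' regime $r\gtrsim p^{-1/2+\eta}$ and produces the last error term in \eqref{asymp-near-sing-Eq}.

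In the ``near'' regime $r\leq p^{-1/2+\eta}$, write $\wt{Z}=(\wt{Z}_{1,g},\wt{Z}_{2,g})$, so that $g^{-1}\wt{Z}=(\wt{Z}_{1,g},g^{-1}\wt{Z}_{2,g})$ since $g$ fixes $\wt{Z}_{1,g}$. We apply the off-diagonal Dai-Liu-Ma expansion \emph{centered at} $\wt{Z}_{1,g}\in\wt{U}_{x_0}^g$ with the Bismut-type rescaling $Y\mapsto Y/\sqrt{p}$, evaluated at the pair of arguments $(\sqrt{p}\,g^{-1}\wt{Z}_{2,g},\,\sqrt{p}\,\wt{Z}_{2,g})$. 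The leading term of this expansion is the Bismut-Mehler Gaussian kernel of the model harmonic oscillator attached to $\dot R^L_{\wt{Z}_{1,g}}$; a direct computation of this kernel at the two rescaled arguments reproduces
\begin{equation*}
\mathcal{L}im_u(\wt{Z}_{1,g})\,\mathcal{E}_{g,\wt{Z}_{1,g}}(u,\sqrt{p}\,\wt{Z}_{2,g}),
\end{equation*}
the prefactor $\kappa^{-1}_{\wt{Z}_{1,g}}(\wt{Z}_{2,g})$ accounting for the transition between $dv_{\wt{TM}}$, in which the model Mehler kernel naturally lives, and $dv_{\wt{M}}$, used to define $\wt{e}^u_p$, via \eqref{def-kappa-intro}. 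The action of $g$ on $L^p\otimes E$ along parallel transport contributes precisely $e^{ip\theta_g}\,g^E(\wt{Z}_{1,g})$. Summing over $g\neq 1$ yields the second line of \eqref{asymp-near-sing-Eq}, with subleading coefficients of the expansion absorbed in the $O(p^{-1/2})$ error.

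The hardest part will be the uniform $\mathscr{C}^\ell$ control simultaneously in the tangential and normal variables: derivatives in $\wt{Z}_{2,g}$ act on $\mathcal{E}_{g,\wt{Z}_{1,g}}(u,\sqrt{p}\,\wt{Z}_{2,g})$ and bring down powers of $\sqrt{p}$ (hence the factor $p^{(\ell-1)/2}$), while derivatives in $\wt{Z}_{1,g}$ require uniform bounds on the full Dai-Liu-Ma coefficients along the fixed submanifold $\wt{U}_{x_0}^g$. Patching the near and far regimes via a smooth cut-off at scale $r\sim p^{-1/2+\eta}$ must be carried out carefully so that its derivatives do not degrade the bound beyond the claimed $Cp^{-1/2}+Cp^{(\ell-1)/2}(1+\sqrt{p}\,r)^N e^{-cpr^2}$; balancing these two contributions is where the bulk of the analytic work is concentrated.
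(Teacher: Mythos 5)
Your overall skeleton — lift to the chart, write the orbifold kernel as a group sum $\sum_{g}(g,1)\,\wt{e}^u_p(g^{-1}\wt{Z},\wt{Z})$, treat $g=1$ by Dai-Liu-Ma at base point $\wt{Z}$, and treat $g\neq 1$ by re-centering the Dai-Liu-Ma off-diagonal expansion at $\wt{Z}_{1,g}\in\wt{U}_{x_0}^g$ with the $\sqrt{p}$-rescaling in the normal directions — is exactly the structure of the paper's proof, and your identification of $\mathcal{L}im_u\cdot\mathcal{E}_g$ as the rescaled Mehler kernel, with $\kappa^{-1}$ coming from the volume comparison and $e^{ip\theta_g}g^E$ from the action of $g$ on $L^p\otimes E$, is also correct. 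However, there are two points where the proposal diverges from what actually works.

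First, the opening step ``extend $(L,h^L)$, $(E,h^E)$ and $g^{TM}$ $G_{x_0}$-equivariantly to a smooth compact complex manifold'' is not available: a compact orbifold need not admit a global finite manifold cover, and there is no reason such an extension exists. What the paper does instead is to use finite propagation speed (which still holds on orbifolds) to show that $e^{-\frac{u}{p}\square_p}(x,x')$ for $x,x'$ near $x_0$ agrees, up to an $O(p^Ke^{-cp})$ error, with the heat kernel of a modified operator $L_{p,x_0}$ living on the non-compact model $M_0=\C^n/G_{x_0}$, whose $G_{x_0}$-invariant lift $\wt{L}_{p,x_0}$ on $\C^n$ coincides with $\square_p$ near $0$ and is a flat Laplacian at infinity (cf.\ \eqref{Eq-localisation-calcul-noyau}). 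This is the precise replacement for your informal ``this is harmless.''

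Second, the near/far decomposition with a cutoff at scale $r\sim p^{-1/2+\eta}$, and the associated worry that ``patching the near and far regimes \ldots is where the bulk of the analytic work is concentrated,'' is a red herring: no patching is needed. The off-diagonal estimate of Dai-Liu-Ma (Theorem \ref{Thm-cve-noyau-Lt} in the paper) already carries a Gaussian weight $\exp(Cu-\tfrac{C'}{u}|\wt{Z}-\wt{Z}'|^2)$ for the \emph{difference} between the rescaled kernel and the model Mehler kernel. After re-centering at $\wt{Z}_{1,g}$ and rescaling, this gives the single estimate \eqref{cvce-noyau-Lp-avec_g-Ep}
\begin{equation*}
\Big|p^{-n}e^{-\frac{u}{p}\wt{L}_{p,x_0}}(g^{-1}\wt{Z},\wt{Z}) - e^{-u\wt{\LL}_{0,\wt{Z}_{1,g}}}(\sqrt{p}g^{-1}\wt{Z}_{2,g},\sqrt{p}\wt{Z}_{2,g})\wt{\kappa}_{\wt{Z}_{1,g}}^{-1}(\wt{Z}_{2,g})\Big| \leq Cp^{\frac{\ell-1}{2}}\big(1+\sqrt{p}|\wt{Z}_{2,g}|\big)^{N} e^{-cp|\wt{Z}_{2,g}|^2},
\end{equation*}
using $|\wt{Z}_{2,g}-g^{-1}\wt{Z}_{2,g}|\geq c_0|\wt{Z}_{2,g}|\geq c_0\, d(Z,M_{sing})$, and this one inequality is valid for all $|\wt{Z}|<\e/2$. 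Thus both the ``near'' and ``far'' behaviour you describe fall out of a single application of the DLM bound; your separate heat-kernel Gaussian estimate for $\wt{e}^u_p(g^{-1}\wt{Z},\wt{Z})$ alone is not needed, and neither is the smooth cutoff. Your discussion of how $\wt{Z}_{2,g}$-derivatives contribute $p^{(\ell-1)/2}$ while $\wt{Z}_{1,g}$-derivatives require uniformity in the base-point parameter is consistent with the paper and is reflected in the $\mathscr{C}^{m'}(M)$-norm in Theorem \ref{Thm-cve-noyau-Lt}.
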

  
\begin{remark}
The term $\mathcal{L}im_u(\wt{Z}_{1,g})\mathcal{E}_{g,\wt{Z}_{1,g}}(u,\sqrt{p}\wt{Z}_{2,g})$ appearing in \eqref{asymp-near-sing-Eq} can be seen as the heat kernel at the time $u$ of some explicit harmonic oscillator (depending on $\wt{Z}_{1,g}$) on $\R^{2n}$, evaluated at $(\sqrt{p}g^{-1}\wt{Z}_{2,g},\sqrt{p}\wt{Z}_{2,g})$. For more details see \eqref{chaleur-L} and \eqref{cvce-noyau-square_p-Ep}.
\end{remark}

\begin{remark}
\label{rem-dvpt-non-unif}
 From Theorem \ref{asymp-near-sing-Thm}, for $x\in M_{sing}$, we have $ \big|p^{-n}e^{-\frac{u}{p}\square_p}(x,x)-|G_{x}|\mathcal{L}im_u(x)\big|\leq Cp^{-1/2}$. In particular, unlike in the usual non-singular case, if $M_{sing}$ is not empty, the asymptotics of Theorem \ref{asymp-away-sing-Thm} cannot be uniform on $M_{reg}$.
 
 An analogous result holds true for the Bergman kernel, see \cite{ma-marinescu} or \cite{MR2215454}.
\end{remark}

Next, we give an application of the inequalities \eqref{Eq-OHMI}.  The class of Moishezon orbifolds is defined in a similar way as usual regular Moishezon manifolds: we begin by defining meromorphic functions on an orbifold as local quotient of holomorphic orbifold functions, and we say that a compact connected orbifold $M$ is a \emph{Moishezon orbifold} if it possesses $\dim M$ meromorphic functions that are algebraically independent, i.e., they satisfy no non-trivial polynomial equation (see Section \ref{sect-def-Moishezon} for details).

For regular Moishezon manifold, Grauert and Riemenschneider \cite{MR0302938} conjectured that if a compact connected manifold $M$ possesses a smooth Hermitian bundle which is semi-positive everywhere and positive on an open dense set, then $M$ is Moishezon. Siu proved a stronger version of this conjecture in \cite{MR755233,MR797421}, and Demailly improve Siu's result in \cite{demailly}. Here, we prove that these results are still valid for orbifolds.

\begin{theorem}
\label{thm-criterion}
  Let $M$ be a compact connected complex orbifold of dimension $n$ and let $(L,h^L)$ be a holomorphic orbifold line bundle on $M$. If one of the following conditions holds:
\begin{enumerate}[label=(\roman*)]
\item (Siu-type criterion) $(L,h^L)$ is semi-positive and positive at one point,
\item (Demailly-type criterion) $(L,h^L)$ satisfies
  \begin{equation}
  \label{eq-criterion}
\int_{M(\leq 1)} \left(\frac{\ic}{2\pi} R^L\right)^n >0,
\end{equation}
\end{enumerate}
then $M$ is a Moishezon orbifold.
\end{theorem}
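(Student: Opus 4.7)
I would proceed in three steps.

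First, (i) implies (ii) by a direct pointwise analysis. Semi-positivity of $(L,h^L)$ forces every eigenvalue of $\dot R^L$ to be non-negative everywhere, so no point admits a negative eigenvalue: $M(q)=\emptyset$ for $q\geq 1$, and hence $M(\leq 1)=M(0)$ is the open set where $\dot R^L$ is positive definite. The positivity point $x_0$ lies in $M(0)$, and by continuity an open neighborhood of $x_0$ stays there. On $M(0)$ the top form $(\frac{\ic}{2\pi}R^L)^n=\det(\dot R^L)\,dv_M$ is pointwise strictly positive, so its integral over $M(\leq 1)$ is $>0$, which is exactly (ii).

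Second, under (ii) I would derive the bigness of $L$ by applying the strong Morse inequality of Theorem \ref{OHMI} with $E=\mathcal{O}_M$ the trivial line bundle and $q=1$; after rearranging the signs one gets
\[
\dim H^0(M,L^p) - \dim H^1(M,L^p) \geq \frac{p^n}{n!}\int_{M(\leq 1)}\Big(\frac{\ic}{2\pi}R^L\Big)^n + o(p^n).
\]
By (ii) the right-hand side is $c_0\,p^n+o(p^n)$ with $c_0>0$, so $\dim H^0(M,L^p)\geq (c_0/2)p^n$ for all $p$ large enough.

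Third, and this is the main step, I need to show that this asymptotic lower bound on $\dim H^0(M,L^p)$ forces $M$ to be Moishezon. The strategy is to reduce to the classical Siegel inequality via a resolution of singularities. Since the singularities of the complex orbifold $M$ are quotient singularities (locally $\wt U_{x_0}/G_{x_0}$), one can construct a resolution $\pi: \wt M \to M$ with $\wt M$ smooth compact of dimension $n$ and $\pi$ proper bimeromorphic, e.g.\ by equivariant blow-ups in each uniformizing chart. After possibly replacing $L$ by a sufficiently divisible power so that it descends to a genuine line bundle on the complex space underlying $M$, the pullback $\pi^* L$ is a holomorphic line bundle on $\wt M$, and pullback of sections yields $\dim H^0(\wt M, (\pi^* L)^p) \geq \dim H^0(M, L^p) \geq c\,p^n$. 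The classical Siegel inequality on the smooth manifold $\wt M$ then forces the algebraic dimension of $\wt M$ to be $n$, i.e.\ $\wt M$ is Moishezon; combined with the identification (carried out in Section \ref{sect-def-Moishezon}) of orbifold meromorphic functions on $M$ with classical meromorphic functions on its underlying complex space, plus the bimeromorphic invariance of the meromorphic function field, this yields $n$ algebraically independent orbifold meromorphic functions on $M$.

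The main obstacle is this last step: transferring the Siegel-type bound $\dim H^0 = O(p^{\mathrm{tr.deg}\,\mathcal M(M)})$ to the orbifold setting. The resolution approach hinges on the clean dictionary between orbifold meromorphic functions on $M$ and meromorphic functions on its underlying complex space, and on the fact that bigness of $L$ is preserved on passing to a high enough power. A direct alternative would be to prove a Siegel inequality intrinsically on $M$ by bounding $\dim H^0$ via $G_{x_0}$-invariant Taylor expansions in each uniformizing chart, but this essentially reduces to the same computation performed in local coordinates.
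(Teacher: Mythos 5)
Your first two steps match the paper: semi-positivity gives $M(q)=\emptyset$ for $q\geq 1$, positivity at a point gives a strictly positive integral over $M(0)=M(\leq 1)$, and Theorem~\ref{OHMI} with $q=1$ and $E$ trivial yields $\dim H^0(M,L^p)\geq c_0 p^n+o(p^n)$. The gap is in your third step, which you rightly identify as the heart of the matter but leave as a sketch. The paper does not pass through a resolution of singularities. It proves an intrinsic orbifold Siegel inequality (Theorem~\ref{thm-Siegel-orbifold}) by running Andreotti's maximum-modulus argument $G$-equivariantly in the uniformizing charts: one covers $M$ by the images of polydiscs in the covers $\wt{U}_{x_i}$, compares $G$-equivariant trivializations of $\wt{L}$ via transition functions, and tracks the resulting constant $C(L^p)=C(L)^p$ while $G$-equivariance is preserved at every step (Lemma~\ref{lem-before-Siegel}). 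From this, Theorem~\ref{thm-chara-big-line-bundle} characterizes bigness of $L$ by the growth of $\dim H^0(M,L^p)$, and Lemma~\ref{lem-big-and-Moishezon} deduces Moishezon directly by exhibiting the $n$ algebraically independent orbifold meromorphic functions $s_1/s_0,\dots,s_n/s_0$ built from sections of some power $L^m$, without ever invoking the underlying complex space.

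Your proposed reduction via resolution is plausible in outline, but as written it relies on several facts that would themselves require proof here: that a sufficiently divisible power $L^N$ descends to an honest line bundle on the normal complex space $|M|$; that sections pull back injectively along the resolution; that the meromorphic function field is a bimeromorphic invariant in this singular context; and, most delicately, that orbifold meromorphic functions in the paper's sense (locally a quotient of two $G$-invariant holomorphic functions) coincide with meromorphic functions on $|M|$ --- a subtlety the paper explicitly flags as \emph{a priori} stronger, immediately after defining $\M$ in Section~\ref{sect-def-Moishezon}. You correctly name the alternative (a direct orbifold Siegel inequality) and correctly observe that it ``essentially reduces to the same computation,'' but you carry out neither route, so the key implication ``large $H^0$ implies Moishezon'' is asserted rather than established.
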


Note that in the course of the proof of Theorem \ref{thm-criterion}, we prove an orbifold version of the famous Siegel's Lemma \cite{MR0074061} (see also \cite[Lemma 2.2.6]{ma-marinescu}), see Theorem \ref{thm-Siegel-orbifold}.\\

This paper is organized as follows. In Section  \ref{Sect-Orbifold} we recall the definitions and basic properties of orbifolds. In Section \ref{cvce-noyau-Sect} we study the convergence of the heat kernel and prove Theorems \ref{asymp-away-sing-Thm} and \ref{asymp-near-sing-Thm}. In Section \ref{Sect-proof} we use these asymptotic result to prove the holomorphic Morse inequalities (Theorem \ref{OHMI}). Finally, in Section \ref{Sect-moishezon}, we use these inequalities to give a geometric criterion for a compact connected orbifold to be a Moishezon orbifold (Theorem \ref{thm-criterion}).

%%%%%%%%%%%%%%%%%%%%%%%%%%%%%%%%%%%%%%%%%%%%%%%%%%%%%%%%%%%%%%%%%%%%%%%%%%%%%%%%%%%%%%%%%%%%%%%%%%%%%%%%%%%%%%%%%%%%%%%%%%%%%%%%%%%%%%%%%%%%%%%%%%%%%%%%%
\section*{Acknowledgements}
The author thanks Xiaonan Ma and George Marinescu for helpful discussions and comments on the present paper..

%%%%%%%%%%%%%%%%%%%%%%%%%%%%%%%%%%%%%%%%%%%%%%%%%%%%%%%%%%%%%%%%%%%%%%%%%%%%%%%%%%%%%%%%%%%%%%%%%%%%%%%%%%%%%%%%%%%%%%%%%%%%%%%%%%%%%%%%%%%%%%%%%%%%%%%%%
\section{Background on orbifolds}
\label{Sect-Orbifold}

In this section we recall the background about orbifold. The content of this section is essentially taken from  \cite{ma-marinescu} and \cite{MR2883416}.
%%%%%%%%%%%%%%%%%%%%%%%%%%%%%%%%%%%%%%%%%%%%%%%%%%%%%%%%%%%%%%%%%%%%%%%%%%%%%%%%%%%%%%%%%%%%%%%%%%%%%%%%%%%%%%%%%%%%%%%%%%%%%%%%%%%%%%%%%%%%%%%%%%%%%%%%%
\subsection{Definitions}
\label{Sect-Orbifold-Def}

We first define a category $\mathcal{M}_s$ as follows.
\begin{description}
\item[Objects:] classes of pairs $(G, M)$ where $M$ is a connected smooth manifold and $G$ is a finite group acting effectively on $M$ (i.e., the unit is the unique element of $G$  acting as $\mathrm{Id}_M$);
\item[Morphisms:] a morphism $\Phi \colon (G, M) \to (G', M')$ is a family of open embeddings $\{\varphi \colon M\to M'\}_{\varphi\in \Phi}$ satisfying:
\begin{enumerate}
\item for each $\varphi\in \Phi$ there is an injective group morphism $\lambda_\varphi \colon G\hookrightarrow G'$ for which $\varphi$ is equivariant, i.e., $\varphi(g.x)=\lambda_\varphi(g).\varphi(x)$ for $x\in M$ and $g\in G$;
\item for $g'\in G'$ and $\varphi \in \Phi$, we have $g'.(\varphi(M))\cap\varphi(M)\neq \emptyset \Longrightarrow g'\in \lambda_\varphi(G)$;
\item for any $\varphi\in \Phi$, we have $\Phi=\{g'\varphi, \: g'\in G'\}$, where $g'\varphi \colon x\in M \mapsto g'.\varphi(x)\in M'$.
\end{enumerate}
\end{description}

\begin{definition}[Orbifold chart, atlas, structure]
 Let $M$ be a paracompact Hausdorff space.
 
 An $m$-dimensional \emph{orbifold chart} on $M$ consists of a connected open set $U$ of $M$, an object $(G_U,\widetilde{U})$ of $\mathcal{M}_s$ with $\dim \wt{U}=m$, and a ramified covering $\tau_U \colon \wt{U}\to U$ which is $G_U$-invariant and induces a homeomorphism $U\simeq \widetilde{U}/G_U$. We denote the chart by $(G_U,\widetilde{U})\overset{\tau_U}{\longrightarrow} U$.
 
 An $m$-dimensional \emph{orbifold atlas} $\mathcal{V}$ on $M$ consists of a family of $m$-dimensional orbifold charts $\mathcal{V}(U)=((G_U,\widetilde{U})\overset{\tau_U}{\longrightarrow} U)$ satisfying the following conditions:
\begin{enumerate}
 \item  the open sets $U\subset M$ form a covering $\mathcal{U}$ such that:
 \begin{equation}
 \label{eq:cond-recouvrement}
\text{for any } U,U'\in \mathcal{U} \text{ and } x\in U\cap U', \text{ there exists } U'' \in \mathcal{U} \text{ such that } x\in U'' \subset U\cap U'.
\end{equation}

\item  for any $U,V \in \mathcal{U}$ with $U \subset V$ there exists a morphism (of $\mathcal{M}_s$) $\Phi_{VU} \colon (G_U,\widetilde{U}) \to (G_V,\widetilde{V})$ which covers the inclusion $U\subset V$ and satisfies $\Phi_{WU}=\Phi_{WV}\circ \Phi_{VU}$ for any $U,V,W\in \mathcal{U}$ with $U \subset V \subset W$. These morphisms are called \emph{restriction morphisms}.
\end{enumerate}

It is easy to see that there exists a unique maximal orbifold atlas $\mathcal{V}_{max}$ containing $\mathcal{V}$: it consists of all orbifold charts $(G_U,\widetilde{U})\overset{\tau_U}{\longrightarrow} U$ which are locally isomorphic to charts from $\mathcal{V}$ in the neighborhood of each point of $M$. A maximal orbifold atlas $\mathcal{V}_{max}$ is called an \emph{orbifold structure} and the pair $(M,\mathcal{V}_{max})$ is called an \emph{orbifold}. As usual, once we have an orbifold atlas $\mathcal{V}$ on $M$, it uniquely determines a maximal atlas $\mathcal{V}_{max}$ and we denote the corresponding orbifold simply by $(M,\mathcal{V})$.
\end{definition}

In the above definition, we can replace $\mathcal{M}_s$ with a category of manifolds with an additional structure such as orientation, Riemannian metric, almost-complex structure or complex structure. In this case we require that the morphisms and the groups preserve the specified structure. In this way we can define oriented, Riemannian, almost-complex or complex orbifolds.

Certainly, for any object $(G_U,\widetilde{U})$ of $\mathcal{M}_s$, we can always construct a $G$-invariant Riemannian metric on $\widetilde{U}$. By a partition of unity argument, there always exists a Riemannian metric on an given orbifold $(M,\mathcal{V})$.

\begin{remark}
 Let $P$ be a smooth manifold, and let $H$ be a compact Lie group acting locally freely on $P$. Then the quotient space $P/H$ is an orbifold. Reciprocally, any orbifold $M$ can be presented by this way, with $H= O(n)$ ($n = \dim M$) see \cite[p. 76]{MR0474432} and \cite[p. 144]{MR641150}.
\end{remark}

\begin{definition}[Regular and singular set]
 Let $(M,\mathcal{V})$ be an orbifold. For each $x\in M$, we can choose a small neighborhood $(G_x,\widetilde{U}_x) \to U_x$ such that $x \in \wt{U}_x$ is a fixed point of $G_x$ (such $G_x$ is unique up to isomorphisms for each $x\in M$ from the definition). If the cardinal $|G_x|$ of $G_x$ is 1, then $x$ is a regular point of $M$, meaning that $M$ is a smooth manifold in a neighborhood of $x$. If $|G_x| > 1$, then $x$ is a singular point of $M$. We denote by $M_{sing} = \{x \in M\: :\: |G_x| > 1\}$ the singular set of $M$ and by $M_{reg}=M\setminus M_{sing}$ the regular set of $M$.
\end{definition}

The following lemma is proved in \cite[Lemma 5.4.3]{ma-marinescu}.
\begin{lemma}
\label{Lem-carte-lineaire}
 With the above notations, we can choose the local coordinates $\wt{U}_x \subset \R^m$ such that the finite growp $G_x$ acts linearly on $\R^m$ and $\{0\}=\tau_x^{-1}(x)$.
\end{lemma}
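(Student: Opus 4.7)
The plan is to linearize the action of $G_x$ near the fixed point $x$ by a classical averaging-plus-exponential-map argument, and then to check that the resulting chart fits into the given orbifold structure.

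First I would equip $\wt{U}_x$ with a $G_x$-invariant Riemannian metric. Starting from any Riemannian metric $g_0$ on $\wt{U}_x$, one averages:
\begin{equation}
\wt{g} = \frac{1}{|G_x|} \sum_{g \in G_x} g^* g_0,
\end{equation}
which is smooth, Riemannian and $G_x$-invariant since $G_x$ is finite. For this metric every element of $G_x$ acts by isometries, and by assumption each such isometry fixes $x$.

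Next I would use the exponential map of $\wt{g}$ at $x$ to transport the action to the tangent space. Let $\exp_x : T_x\wt{U}_x \to \wt{U}_x$ be the Riemannian exponential; it is a local diffeomorphism from a neighborhood of $0$ onto a neighborhood of $x$. For every $g \in G_x$, the differential $dg_x \in \mathrm{GL}(T_x\wt{U}_x)$ is orthogonal with respect to $\wt{g}_x$, and because $g$ is an isometry fixing $x$ the naturality of the exponential gives
\begin{equation}
g \circ \exp_x = \exp_x \circ\, dg_x
\end{equation}
on a suitable neighborhood of $0$. Choosing a $G_x$-invariant open ball $B_\varepsilon(0) \subset T_x\wt{U}_x$ on which $\exp_x$ is a diffeomorphism onto its image (such balls exist because the $G_x$-action is linear and in particular preserves the $\wt{g}_x$-norm), I identify $T_x\wt{U}_x$ with $\R^m$ by a $\wt{g}_x$-orthonormal basis, and use $\exp_x : B_\varepsilon(0) \to \wt{U}_x'$ as the new local chart. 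By construction $G_x$ acts linearly on this ball and $0$ is the unique preimage of $x$.

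The remaining point, which is the delicate one, is to verify that the linearized action is still effective and that the new chart belongs to the maximal atlas $\mathcal{V}_{max}$. Effectiveness follows because if $dg_x = \mathrm{Id}$ for some $g \in G_x$, then the intertwining relation shows $g$ acts trivially on a neighborhood of $x$; since $\wt{U}_x$ is connected and $g$ is smooth, unique continuation (for instance, the set where $g$ coincides with $\mathrm{Id}$ is both closed and open) forces $g = \mathrm{Id}_{\wt{U}_x}$, and effectiveness on $\wt{U}_x$ gives $g = 1$. For compatibility with the atlas, the new chart $(G_x, B_\varepsilon(0)) \to \wt{U}_x'/G_x$ is, by the intertwining relation, isomorphic in $\mathcal{M}_s$ to the restriction of the original chart to $\wt{U}_x'$, so it lies in $\mathcal{V}_{max}$. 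Shrinking if necessary so that the image is contained in $U_x$ concludes the construction.
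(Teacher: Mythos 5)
Your argument is the standard Bochner linearization: average a Riemannian metric to make it $G_x$-invariant, then use the naturality of the exponential map under isometries to conjugate the action to its linearization $dg_x$ on $T_x\wt{U}_x \simeq \R^m$. This is exactly the approach of the reference the paper cites (Ma--Marinescu, Lemma 5.4.3), so the proposal is correct and takes essentially the same route; the only minor point to tighten is the unique-continuation step, where openness of the coincidence set should be argued via the set $\{p : g(p)=p,\ dg_p=\mathrm{Id}\}$, which is clopen by the exponential-map argument applied at each of its points.
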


\textbf{In the sequel we will always use such charts.}

\begin{definition}[Orbifold vector bundle]
 An orbifold vector bundle $E$ over an orbifold $(M,\mathcal{V})$ is defined as follows: $E$ is an orbifold and for $U \in  \mathcal{U}$, $(G^E_U,\wt{p}_U : \wt{E}_U \to \wt{U})$ is a $G^E_U$-equivariant vector bundle where $(G^E_U,\wt{E}_U)$ gives the orbifold structure of $E$ and $(G_U =G^E_U/K_U^E,\wt{U})$, $K_U^E = \ker\big(G^E_U \to \mathrm{Diffeo}(\wt{U})\big)$,  gives the orbifold structure on $M$. We also require that for any $V\subset U$, each embedding in the restriction morphism of $E$ is an isomorphism of equivariant vector bundles compatible with an embedding in the restriction morphism of $X$.
 
  If moreover $G^E_U$ acts effectively on $\wt{U}$ for $U \in \mathcal{U}$, i.e. $K_U^E = \{1\}$, we call $E$ a proper orbifold vector bundle.
\end{definition}

Let $E$ be an orbifold vector bundle on $(M, \mathcal{V})$. For $U \in \mathcal{U}$, let $\wt{E^{pr}_U}$ be the maximal $K^E_U$-invariant sub-bundle of $\wt{E}_U$ on $U$. Then $(G_U, \wt{E^{pr}_U})$ defines a proper orbifold vector bundle on $(M, \mathcal{V})$, which is denoted by $E^{pr}$.

\begin{example}
 The (proper) orbifold tangent bundle $TM$ of an orbifold $M$ is defined by $(G_U , T \wt{U} \to \wt{U})$, for $U \in \mathcal{U}$.
\end{example}

\begin{definition}[$\mathscr{C}^k$ section]
 Let $E\to M$ be an orbifold bundle. A section $s \colon M \to E$ is called $\mathscr{C}^k$, for $k \in \N\cup\{\infty \}$, if for each $U \in \mathcal{U}$, $s_{|U}$ is covered by a $G^E_U$-invariant $\mathscr{C}^k$ section $\wt{s}_U \colon \wt{U} \to \wt{E}_U$. We denote by $\mathscr{C}^k(M,E)$ the space of $\mathscr{C}^k$ sections of $E$ on $M$.
 
 \textbf{When it entails no confusion, we will denote $\wt{s}_U$ simply by $s$.}
\end{definition}

\begin{remark}
\label{rem-smooth-objects}
 A smooth object on $M$ as a section, a Riemannian metric, a complex structure, etc... can be seen as an usual regular object on $M_{reg}$ such that its lift in any chart of the orbifold atlas can be extended to a smooth corresponding object.
\end{remark}

\begin{definition}[Integration]
If $M$ is oriented, we define the integral $\int_M \omega$ for a form $\omega$ over $M$ (i.e. a section of $\Lambda^\bullet(T^*M)$ over $M$) as follows: if $\mathrm{supp}(\omega) \subset U \in \mathcal{U}$, then
\begin{equation}
\label{defn-integration-eq}
\int_M \omega = \frac{1}{|G_U|}\int_{\wt{U}} \wt{\omega}_U.
\end{equation}
It is easy to see that the definition is independent of the chart. For general $\omega$ we extend the definition by using a partition of unity.

Note also that if $M$ is a Riemannian orbifold, there exists a canonical volume element $dv_M$ on $M$, which is a section of $\Lambda^{\dim M}(T^*M)$. Hence, we can also integrate functions on $M$.
\end{definition}

\begin{definition}[Metric structure on Riemannian orbifold]
 Let $(M,\mathcal{V})$ be a compact Riemannian orbifold. For $x,y\in M$, we define $d^M(x,y)$  by:
 \begin{multline}
d^M(x,y)= \inf_\gamma \Big\{ \textstyle{\sum_i}\int_{t_{i-1}}^{t_i} | \derpar{}{t}\tilde{\gamma}_i(t)|dt \:\, \Big | \: \gamma\colon[0,1]\to M,\, \gamma(0)=x,\, \gamma(1)=y, \text{ such that }\\
\text{there are } t_0=0<t_1<\dots<t_k=1 \text{ with } \gamma([t_{i-1},t_i])\subset U_i,\, U_i\in \mathcal{U},\\
 \text{and a } \smooth \text{ map } \tilde{\gamma}_i\colon [t_{i-1},t_i] \to \wt{U}_i \text{ which covers } \gamma|_{[t_{i-1},t_i]} \Big\}.
\end{multline}
Then $(M,d)$ is a metric space.
\end{definition}

%%%%%%%%%%%%%%%%%%%%%%%%%%%%%%%%%%%%%%%%%%%%%%%%%%%%%%%%%%%%%%%%%%%%%%%%%%%%%%%%%%%%%%%%%%%%%%%%%%%%%%%%%%%%%%%%%%%%%%%%%%%%%%%%%%%%%%%%%%%%%%%%%%%%%%%%%
\subsection{Kernels on orbifolds}
\label{Sect-kernel-orbifolds}

Let $(M,\mathcal{V})$ be a Riemannian orbifold and let $E$ be a proper orbifold vector bundle on $M$.

For any orbifold chart $(G_U,\wt{U})\overset{\tau_U}{\longrightarrow} U$, $U\in \mathcal{U}$, we will add a tilda $\tilde{\phantom{a}}$ to objects on $U$ to indicate the corresponding objects on $\wt{U}$.

Consider a section $\wt{\mathcal{K}}\in \smooth (\wt{U}\times \wt{U}, \mathrm{pr}_1^*\wt{E}\otimes \mathrm{pr}_2^* \wt{E}^*)$ such that
\begin{equation}
(g,1)\wt{\mathcal{K}}(g^{-1}\tilde{x}, \tilde{x}')=(1,g^{-1})\wt{\mathcal{K}}(\tilde{x},g\tilde{x}') \quad \text{ for any } g\in G_U,
\end{equation}
where the action of $G_U\times G_U$ on $\wt{E}_{\tilde{x}}\otimes \wt{E}^*_{\tilde{x}'}$ is given by $(g_1,g_2). u\otimes \xi=(g_1u)\otimes (g_2\xi)$. We can then define an operator $\wt{\mathcal{K}}\colon \smooth_0(\wt{U},\wt{E})\to \smooth(\wt{U},\wt{E})$ by
\begin{equation}
(\wt{\mathcal{K}}\tilde{s})(\tilde{x})=\int_{\wt{U}} \wt{\mathcal{K}}(\tilde{x},\tilde{x}')\tilde{s}(\tilde{x}')dv_{\wt{U}}(\tilde{x}') \quad \text{ for } \tilde{s}\in \smooth_0(\wt{U},\wt{E}).
\end{equation}

Recall that a section $s\in \smooth(U,E)$ is identified with a $G_U$-invariant section $\tilde{s}\in \smooth(\wt{U},\wt{E})$. Thus, we can define an operator $\mathcal{K}\colon \smooth_0(U,E)\to \smooth(U,E)$ by 
\begin{equation}
(\mathcal{K}s)(x)=\frac{1}{|G_U|}\int_{\wt{U}} \wt{\mathcal{K}}(\tilde{x},\tilde{x}')\tilde{s}(\tilde{x}')dv_{\wt{U}}(\tilde{x}') \quad \text{ for } s\in \smooth_0(U,E),
\end{equation}
where $\tilde{x}\in \tau_U^{-1}(x)$. Then the smooth kernel $\mathcal{K}(x,x')$ of the operator $\mathcal{K}$ with respect to $dv_M$ is given by
\begin{equation}
\mathcal{K}(x,x')=\sum_{g\in G_U} (g,1)\wt{\mathcal{K}}(g^{-1}\tilde{x}, \tilde{x}').
\end{equation}

%%%%%%%%%%%%%%%%%%%%%%%%%%%%%%%%%%%%%%%%%%%%%%%%%%%%%%%%%%%%%%%%%%%%%%%%%%%%%%%%%%%%%%%%%%%%%%%%%%%%%%%%%%%%%%%%%%%%%%%%%%%%%%%%%%%%%%%%%%%%%%%%%%%%%%%%%
\subsection{Complex orbifolds and Dolbeault cohomology}
\label{sect-complex-orbifolds}

Let $M$ be a compact complex orbifold of complex dimension $n$ and with complex structure $J$. Let $E$ be a holomorphic orbifold vector bundle on $M$.

Let $\0_M$ be the sheaf over $M$ of local $G_U$-invariant holomorphic functions over $\wt{U}$, for $U\in \mathcal{U}$. An element of $\0_M(M)$ is called an \emph{orbifold holomorphic function} on $M$.

Likewise, the local $G_U^E$-invariant sections of $\wt{E}$ over $\wt{U}$ define a sheaf $\0_M(E)$ over $M$. Let $H^\bullet(M,\0_M(E))$ be the cohomology of this sheaf. Notice that, by the definition, we have $\0_M(E)=\0_M(E^{pr})$. Thus without lost generality, we may and will assume that $E$ is a proper orbifold vector bundle on $M$.

Consider a section $s\in \smooth(M,E)$ and a local section $\tilde{s}\in \smooth(\wt{U},\wt{E})$ covering $s$ over $U$. Then $\db^{\wt{E}}\tilde{s}$ covers a section  of $T^{*(0,1)}X\otimes E$ over $U$, denoted by $\db^Es|_U$. The sections $\db^Es|_U$ for $U\in \mathcal{U}$ patch together to define a global section $\db^E s$ of $T^{*(0,1)}X\otimes E$ over $M$. In a similar way, we can ddefine $\db^E\alpha$ for $\alpha\in \Omega^{\bullet,\bullet}(M,E):=\smooth(M,\Lambda^{\bullet,\bullet}(T^*M)\otimes E)$. We thus obtain the Dolbeault complex
\begin{equation}
0\to \Omega^{0,0}(M,E) \overset{\db^E}{\to}\cdots \overset{\db^E}{\to} \Omega^{0,n}(M,E)\to 0.
\end{equation}
From the abstract de Rham theorem, there exists a canonical isomorphism (for more details, see \cite[Section 5.4.2]{ma-marinescu})
\begin{equation}
H^\bullet(\Omega^{0,\bullet}(M,E),\db^E)\simeq H^\bullet(M,\0_M(E)).
\end{equation}
In the sequel, we will denote both these cohomology groups simply by $H^\bullet(M,E)$.

Let $g^{TM}$ be a Riemannian metric on $TM$, with associated volume form $dv_M$, and let $h^E$ be a Hermitian metric on $E$. They induce a $L^2$ Hermitian product $\langle \cdot \, , \cdot \rangle$ on $\Omega^\bullet(M,E)$ given by 
\begin{equation}
\label{def-L^2-product}
\langle s_1,s_2\rangle = \int_M \langle s_1(x),s_2(x)\rangle _{\Lambda^{0,\bullet}(T^*X)\otimes E}\, dv_M(x).
\end{equation}
Let $\db^{E,*}$ be the formal adjoint of $\db^E$ for this $L^2$ product, and let 
\begin{equation}
\begin{aligned}
& D^E=\sqrt{2}(\db^E+\db^{E,*}), \\
& \square^E = \frac{1}{2} D^{E,2} = \db^E\db^{E,*}+\db^{E,*}\db^E.
\end{aligned}
\end{equation}
be, respectively, the associated Dolbeault-Dirac operator and Kodaira Laplacian. Then $\square^E$ a differential operator of order 2 acting on sections of $\Lambda^\bullet(T^*M)\otimes E$ (i.e., on each $U$, it is covered by a $G_U^{\Lambda^\bullet(T^*M)\otimes E}$-invariant differential operator of order 2 acting on $\smooth(\wt{U},\Lambda^\bullet(\wt{T^*M})\otimes \wt{E})$), which is formally self-adjoint  and elliptic. Moreover, it preserves the $\Z$-grading on $\Omega^\bullet(M,E)$.

A crucial point is that classical Hodge theory still holds in the present orbifold setting, see \cite[Proposition 2.2]{MaTAMS}.
\begin{theorem}[Hodge theory]
\label{Thm-Hodge}
 For any $q\in \N$, we have the following orthogonal decomposition
 \begin{equation}
\Omega^{0,q}(M,E) = \Ker(D^E|_{\Omega^{0,q}})\oplus \Im(\db^E|_{\Omega^{0,q-1}})\oplus \Im(\db^{E,*}|_{\Omega^{0,q+1}}).
\end{equation}
In particular, for $q\in \N$, we have the canonical isomorphism 
\begin{equation}
\Ker(D^E|_{\Omega^{0,q}})=\Ker(\square^{E}|_{\Omega^{0,q}}) \simeq H^q(M,E).
\end{equation}
\end{theorem}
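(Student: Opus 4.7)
The plan is to reduce the statement to a standard application of Hodge theory for elliptic self-adjoint operators, with the orbifold setting handled by lifting locally to the orbifold charts. First I would establish the elliptic theory for $\square^E$ on the compact orbifold $M$. On each chart $(G_U,\wt{U})$, the operator $\square^E$ is covered by a $G_U$-equivariant elliptic operator $\wt{\square}^{\wt{E}}$ of order $2$ on $\smooth(\wt{U}, \Lambda^{0,\bullet}(\wt{T^*U})\otimes \wt{E})$, so the classical interior Gårding estimates apply on the manifold cover and restrict to $G_U$-invariant sections. Patching these estimates via a partition of unity subordinate to the orbifold atlas gives global elliptic estimates on $M$. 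Since $M$ is compact and since $\langle \square^E s, s\rangle = \|\db^E s\|^2 + \|\db^{E,*} s\|^2 \geq 0$ for every $s \in \Omega^{0,q}(M,E)$, the operator $\square^E$ extends to a non-negative self-adjoint operator on the $L^2$ completion of $\Omega^{0,q}(M,E)$ with respect to \eqref{def-L^2-product}, having discrete spectrum, finite-dimensional eigenspaces, and smooth eigensections by elliptic regularity.

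Second, the identification $\Ker(D^E|_{\Omega^{0,q}}) = \Ker(\square^E|_{\Omega^{0,q}})$ is a direct algebraic consequence of the previous formula: if $D^E s = 0$ then $\square^E s = \tfrac{1}{2} D^{E,2} s = 0$, and conversely $\square^E s = 0$ forces $\|\db^E s\|^2 + \|\db^{E,*} s\|^2 = 0$, so that $\db^E s = \db^{E,*} s = 0$ and hence $D^E s = 0$.

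Third, to obtain the orthogonal decomposition, I would introduce the Green operator $G$: setting $\har^q := \Ker(\square^E|_{\Omega^{0,q}})$, let $G$ be zero on $\har^q$ and the $L^2$ inverse of $\square^E$ on its orthogonal complement (which exists and is bounded by the spectral description from Step 1). Then $\Id = P_{\har^q} + \square^E G = P_{\har^q} + \db^E(\db^{E,*}G) + \db^{E,*}(\db^E G)$ on $\Omega^{0,q}(M,E)$, giving the desired decomposition. Pairwise orthogonality is standard: any $s \in \har^q$ satisfies $\db^E s = \db^{E,*} s = 0$, so $\langle s, \db^E \alpha\rangle = \langle \db^{E,*}s, \alpha\rangle = 0$ and similarly for $\db^{E,*}\beta$, while $\langle \db^E \alpha, \db^{E,*}\beta\rangle = \langle \db^{E,2}\alpha, \beta\rangle = 0$. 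Here I would use that the integration by parts underlying the formal adjointness $\langle \db^E\alpha, \beta\rangle = \langle \alpha, \db^{E,*}\beta\rangle$ is valid on $M$ because the integral on an orbifold is defined so as to make the usual Stokes-type formulas pass from each chart $\wt{U}$ (where they are classical) to $M$ via a partition of unity.

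Finally, the isomorphism $\har^q \simeq H^q(M,E)$ follows by the standard argument: the map $\har^q \to H^q(M,E)$ sending a harmonic form to its $\db^E$-cohomology class is well-defined since harmonic implies $\db^E$-closed. It is injective because a harmonic form that is $\db^E$-exact must be $L^2$-orthogonal to itself by the decomposition, hence zero; and it is surjective because any $\db^E$-closed $\alpha$ decomposes as $\alpha = P_{\har^q}\alpha + \db^E\beta + \db^{E,*}\gamma$, and applying $\db^E$ and taking the $L^2$-pairing with $\gamma$ forces $\|\db^{E,*}\gamma\|^2 = 0$, so $\alpha$ and $P_{\har^q}\alpha$ represent the same class. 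The only genuinely orbifold-specific step is the elliptic theory of Step 1; once one takes for granted that $G_U$-invariance and the local quotient structure allow one to run the standard elliptic machinery chart by chart, the remainder of the proof is formally identical to the smooth compact case, which is why the author simply cites \cite[Proposition 2.2]{MaTAMS}.
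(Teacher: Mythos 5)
Your proposal is correct and follows the standard route: the paper itself gives no proof of this theorem but simply cites \cite[Proposition 2.2]{MaTAMS}, and your argument (lifting to $G_U$-equivariant elliptic operators on the charts $\wt{U}$, restricting elliptic estimates to $G_U$-invariant sections, patching via a partition of unity, then running the classical Green-operator/Hodge-decomposition machinery) is precisely the expected content behind that citation. You identify this yourself in your closing remark, so your reconstruction is faithful to the paper's intent.
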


Note that on orbifolds, we still have a unique Hermitian and holomorphic connection $\n^E$ associated with $E$ and $h^E$. We call it the Chern connection of $(E,h^E)$.

%%%%%%%%%%%%%%%%%%%%%%%%%%%%%%%%%%%%%%%%%%%%%%%%%%%%%%%%%%%%%%%%%%%%%%%%%%%%%%%%%%%%%%%%%%%%%%%%%%%%%%%%%%%%%%%%%%%%%%%%%%%%%%%%%%%%%%%%%%%%%%%%%%%%%%%%%
\section{Convergence of the heat kernel}
\label{cvce-noyau-Sect}

In the sequel, when we define objects on orbifolds, one can always think of them as being defined by standard objects which are $G_U$-invariant on each chart $(G_U,\wt{U})$.

Let $M$ be a compact complex orbifold of dimension $n$. We denote the complex structure of $M$ by $J$. Let $(L,h^L)$ an orbifold Hermitian holomorphic line bundle on $M$ and let $(E,h^E)$ be an orbifold Hermitian holomorphic vector bundle on $M$. Recall that by Section \ref{sect-complex-orbifolds} we may assume that $L$ and $E$ are proper. We denote the corresponding Chern connections by $\n^L$ and $\n^E$ respectively, and we denote their curvatures by $R^L$ and $R^E$.

We define the orbifold bundles $\E$ and  $\E_p$ over $M$ by
\begin{equation}
\begin{aligned}
& \E= \Wedge(T^*M)\otimes E,\\
& \E_{p}= \Wedge(T^*M)\otimes E \otimes L^p.
\end{aligned}
\end{equation}
Let $g^{TM}$ be a Riemannian metric on $TM$ which is compatible with $J$.  Then $\E$ and $\E_p$ are naturally equipped with the Hermitian metrics $h^{\E}$ and $h^{\E_p}$ induced by $g^{TM}$, $h^L$ and $h^E$. We endow $\smooth(M,\E_p)$ with the $L^2$ scalar product associated with $g^{TM}$, $h^L$ and $h^E$ as in \eqref{def-L^2-product}. As in section \ref{sect-complex-orbifolds}, we can define $D^{L^p\otimes E}$ and $\square^{L^p \otimes E}$. We will denote these operators respectively by $D_p$ and $\square_p$ for short.

Let  $e^{-u\square_p}$ be the heat kernel of $\square_p$ and let $e^{-u\square_p}(x,x')$ be its smooth kernel with respect to $dv_M(x')$. Concerning heat kernels on orbifolds, we refer the reader to \cite[Section 2.1]{MaTAMS}. 

In this section, we study the convergence as $p\to +\infty$ of the heat kernel. We follow the approach of \cite{ma-marinescu}.

\subsection{Localisation} 

Let $\e>0$ be a small number (smaller than the quarter of the injectivity radius of $(M,g^{TM})$). Let $f\colon \R \to [0,1]$ be a smooth even function such that
\begin{equation}
\label{def-f}
f(t)=\left \{
\begin{aligned}
&1 \text{ for } |t|<\e/2, \\
& 0 \text{ for } |t|>\e.
\end{aligned}
\right.
\end{equation}

For $u>0$ and $a\in \C$, set
\begin{equation}
\label{defFuGuHu}
\begin{aligned}
&\F_u(a)=\int_\R e^{iva}\exp(-v^2/2)f(v\sqrt{u})\frac{dv}{\sqrt{2\pi}}, \\
&\G_u(a)=\int_\R e^{iva}\exp(-v^2/2)(1-f(v\sqrt{u}))\frac{dv}{\sqrt{2\pi}}.
\end{aligned}
\end{equation}

These functions are even holomorphic functions. Moreover, the restrictions of $\F_u$ and $\G_u$  to $\R$ lie in the Schwartz space $\mathcal{S}(\R)$, and
\begin{equation}
\label{liensFGH}
 \F_u(vD_p)+\G_u(vD_p)=\exp\left( -\frac{v^2}{2}D_p^2\right) \text{ for }v>0.
\end{equation}

Let $\G_u(vD_p)(x,x')$ be the smooth kernel of $\G_u(vD_p)$ with respect to $dv_M(x')$. Then for any $m\in \N$, $u_0>0$, $\e>0$, there exist  $C>0$ and $N\in \N$ such that for any $u> u_0$ and any $p\in \N^*$,
\begin{equation}
\label{the-pb-is-local-eq}
\left|  \G_{\frac{u}{p}} \left(\sqrt{u/p}D_p\right)(\cdot\, ,\cdot) \right|_{\mathscr{C}^m(M\times M)} \leq Cp^N  \exp \left( -\frac{\e^2p}{16u} \right).
\end{equation}

This is proved in the same way as \cite[Proposition 1.6.4]{ma-marinescu}, the only difference is that when we use Sobolev norms or inequality on an open $U$, we in fact have to use them on $\wt{U}$ for the pulled-back operators.

As pointed out in \cite[Section 6.6]{MaTAMS}, the property of finite propagation speed of solutions of hyperbolic equations still holds on an orbifold (see the proof in \cite[Appendix D.2]{ma-marinescu}). Thus, $\F_{\frac{u}{p}} \left(\sqrt{u/p}D_p\right)(x,x')$ vanishes if $d^M(x,x')\geq \e$ and $\F_{\frac{u}{p}} \left(\sqrt{u/p}D_p\right)(x,\cdot)$ only depends on the restriction of $D_p$ to the ball $B^M(x,\e)$. This, together with \eqref{liensFGH} and \eqref{the-pb-is-local-eq}, implies that the problem of the asymptotic of $e^{-u\square_p}(x,\cdot)$ is local. 

More precisely, this means that, for $x_0\in M$ fixed, one can trivialize the various bundles over $U_{x_0}$ and replace $M$ by $M_0 = \C^n/G_{x_0} \supset  \wt{U}_{x_0}/G_{x_0}= U_{x_0}$ (using a local chart as in Lemma \ref{Lem-carte-lineaire}). Then we construct a metric $g^{TM_0}$ on $M_0$ and an operator $L_{p,x_0}$ acting on $\E_{p,x_0}$ over $M_0$ such that $g^{TM_0}$ (resp. $L_{p,x_0}$) coincides with $g^{TM}$ (resp. $\square_p$) near $x_0=0$ and such that its lift $\wt{g^{TM_0}}$ (resp. $\wt{L}_{p,x_0}$) on $\wt{M_0}=\C^n$ is the constant metric $(\wt{g^{TM}}_{U_{x_0}})_{\tilde{x}=0}$ (resp. the usual Laplacian on $\C^n$ for this metric) away from 0. Then we can approximate the heat kernel of $\square_p$ by the one of $L_{p,x_0}$, see equation \eqref{Eq-localisation-calcul-noyau} below. \\

We now give the details of these constructions, following \cite[Section 1.6.3]{ma-marinescu}. 

By \cite[(1.2.61) and (1.4.27)]{ma-marinescu}, the Levi-Civita connection $\n^{TM}$ on $(M,g^{TM})$, the complexe structure $J$ of $M$ and the Chern connection $\n^E$ induce a Hermitian connexion $\n^{B,\E}$ on $(\E,h^{\E})$ which preserve the $\Z$-grading. This connection is called the Bismut connection. Let $\Delta^{B,\E}$ be the associated Bochner Laplacian, that is
\begin{equation}
\Delta^{B,\E} = -\sum_{i=1}^{2n}\Big( (\n^{B,\E}_{e_i})^2-\n^{B,\E}_{\n^{TM}_{e_i}e_i}\Big),
\end{equation}
with $\{e_i\}$ an orthonormal frame of $TM$. 

Let $\{w_j\}_{j=1}^n$ be a local  orthonormal frame of $T^{(1,0)}M$ (with the metric induced by $g^{TM}$) with dual frame $\{w^j\}_{j=1}^n$. Set
 \begin{equation}
 \label{Eq-def-omega_d-tau}
\begin{aligned}
&\omega_d = -\sum_{k,\ell} R^{L} (w_k,\bw_\ell) \bw^\ell\wedge i_{\bw_k}, \\
&\tau = \sum_j R^{L} (w_j,\bw_j).
\end{aligned}
\end{equation}

From Bismut's Lichnerowicz formula (see \cite[Theorem 1.4.7]{ma-marinescu}), which still holds here because the computations involved are local and hence can be carried on orbifold charts, there exists a self-adjoint section $\Phi_E$  of $\End(\Wedge(T^*M)\otimes E))$ such that
\begin{equation}
\label{lichnerowicz}
\square_p = \frac{1}{2} \Delta^{B,\E} +\omega_d +\frac{1}{2}\tau+\Phi_E.
\end{equation}

We fix $x_0\in M$ and $\e>0$ smaller than the quarter of the injectivity radius of $M$. In the sequel we will use a local chart $(G_{x_0},\wt{U}_{x_0})$ near $x_0$ as in Lemma \ref{Lem-carte-lineaire}. Note that we then have $T_{x_0}M\simeq \C^n/G_{x_0}$.

We denote by $B^M(x_0 ,4\e)$ and $B^{T_{x_0}M}(0,4\e)$ the open balls in $M$ and $T_{x_0}M$ with center $x_0$ and 0 and radius $4\e$, respectively. The exponential map $T_{x_0}M \owns Z \mapsto \exp_{x_0}^M (Z) \in M$ is a diffeomorphism from $B^{T_{x_0}M}(0,4\e)$ on $B^M(x_0 ,4\e)$. From now on, we identify $B^{T_{x_0}M}(0,\e)$ and $B^M(x_0 ,4\e)$.

For $Z\in B^{T_{x_0}M}(0,4\e)$, we identify $(L_Z,h^L_Z)$ and $(\E_Z,h^{\E}_Z)$ to $(L_{x_0},h^L_{x_0})$ and $(\E_{x_0},h^{\E}_{x_0})$ by the parallel transport with respect to $\n^L$ and $\n^{B,\E}$ along the ray $u\in [0,1]\mapsto uZ$. We denote the corresponding connection forms by $\Gamma^L$ and $\Gamma^{\E}$. Note that $\Gamma^L$ and $\Gamma^{\E}$ are skew-adjoint with respect to $h^L_{x_0}$ and $h^{\E}_{x_0}$.

Let $\rho \colon \R\to [0,1]$ be a smooth even function such that
\begin{equation}
\rho(v) =1 \text{ if } |v|<2 \quad \text{and} \quad \rho(v) =0 \text{ if } |v|>4.
\end{equation}

We denote by $\n_{\wt{V}}$ the ordinary differentiation operator on $\wt{M_0}=\C^n$ in the direction $\wt{V}$.  Then $\n$ is $G_{x_0}$-equivariant since $G_{x_0}$ acts linearly on $\C^n$, thus it induces an operator still denoted by $\n$ on $M_0= \C^n/G_{x_0}$. Set
\begin{equation}
\n^{\E_{p,x_0}} = \n +\rho(|Z|/\e) (p\Gamma^L+\Gamma^\E)(Z),
\end{equation}
which is a Hermitian connection on the trivial bundle $(\E_{p,x_0},h^{\E_p}_{x_0})$ over $M_0\simeq T_{x_0}M$, where the identification is given by
\begin{equation}
[Z_1,\dots, Z_{2n}]\in \R^{2n}/G_{x_0} \mapsto \Big[ \textstyle{\sum_{i=1}^{2n}}Z_i\wt{e_i} \Big] \in T_{x_0}M.
\end{equation}
Here, $\{\wt{e_i}\}_i$ is an orthonormal basis of $(\wt{TM}_{U_{x_0}})_{\tilde{x}=0}$.

Let $g^{TM_0}$ be the metric on $M_0$ which coincides with $g^{TM}$ on $B^{T_{x_0}M}(0,2\e)$ and such that its lift $\wt{g^{TM_0}}$ on $\wt{M_0}$ is the constant metric $(\wt{g^{TM}}_{U_{x_0}})_{\tilde{x}=0}$ outside of $B^{\wt{M_0}}(0,4\e)$. Let $dv_{\wt{M_0}}$ be the Riemannian volume of $\wt{g^{TM_0}}$.

Let $\Delta^{\E_{p,x_0}}$ be the Bochner Laplacian associated with $\n^{\E_{p,x_0}}$ and $g^{TM_0}$. Set
\begin{equation}
L_{p,x_0} = \frac{1}{2}\Delta^{\E_{p,x_0}} -p\rho(|Z|/\e)(\omega_{d,Z}+\frac{1}{2}\tau_Z)-\rho(|Z|/\e)\Phi_{E,Z}.
\end{equation}
Then $L_{p,x_0}$ is a self-adjoint operator for the $L^2$-product induced by $h^{\E_p}_{x_0}$ and $g^{TM_0}$, and coincides with $\square_p$ on $B^{T_{x_0}M}(0,2\e)$. We denote by $\wt{L}_{p,x_0}$ the $G_{x_0}$-invariant lift of $L_{p,x_0}$ on $\wt{M_0}$.

From the above discussion, and using the same technics as in \cite[Lemma 1.6.5]{ma-marinescu} (because, as said before, the property of finite propagation speed of solutions of hyperbolic equations still holds on orbifolds) we can then prove that for $(x,x') \in B^M(x_0,\e/2)$ corresponding to $(Z,Z')\in M_0$, there exist $C>0$ and $K\in \N$ such that
\begin{equation}
\label{Eq-localisation-calcul-noyau}
\left| e^{-\frac{u}{p}\square_p}(x,x')-e^{-\frac{u}{p}L_{p,x_0}}(Z,Z') \right|\leq Cp^Ke^{-\frac{\e^2p}{16u}},
\end{equation}
 where $e^{-\frac{u}{p}L_{p,x_0}}(Z,Z')$ is the smooth kernel of $e^{-\frac{u}{p}L_{p,x_0}}$ with respect to $dv_{M_0}(Z')$, the Riemannian volume of $g^{TM_0}$.

Now, as in Section \ref{Sect-kernel-orbifolds}, we have
\begin{equation}
\label{Eq-noyau-et-releve}
e^{-\frac{u}{p}L_{p,x_0}}(Z,Z')= \sum_{g\in G_{x_0}} (g,1)e^{-\frac{u}{p}\wt{L}_{p,x_0}}(g^{-1}\wt{Z}, \wt{Z}'),
\end{equation}
where $e^{-\frac{u}{p}\wt{L}_{p,x_0}}(\tilde{Z},\tilde{Z}')$ is the smooth kernel of $e^{-\frac{u}{p}\wt{L}_{p,x_0}}$ with respect to $dv_{\wt{M_0}}(\tilde{Z}')$.

Thus, from \eqref{Eq-localisation-calcul-noyau} and \eqref{Eq-noyau-et-releve}, we have to study the asymptotic of $e^{-\frac{u}{p}\wt{L}_{p,x_0}}(\tilde{Z}, \tilde{Z}')$, which is a kernel on a honest vector space. Note that, even if we want to restrict $e^{-\frac{u}{p}\square_p}$ to the diagonal, we have to study the off-diagonal asymptotic of $e^{-\frac{u}{p}\wt{L}_{p,x_0}}$.

 \subsection{Rescaling} 
 As in \cite{ma-marinescu}, we will rescale the variables in $\wt{M_0}$.
 
 \textbf{In this paragraph, we work on $\wt{U}_{x_0}$ and for any bundle $F$ on $M$ we will denote $\wt{F}_{U_{x_0}}$ simply by $\wt{F}$. Likewise, we will drop the subscript $U_{x_0}$ in the notations of lifts to $\wt{F}$ of objects on $F$.}
 
 Let $S_{\wt{L}}$ be a $G_{x_0}$-invariant unit vector of $\wt{L}|_0$.  Using $S_{\wt{L}}$ and the above discussion, we get an isometry $\wt{\E}_{p,x_0}\simeq \wt{\E}_{x_0}$. Thus, $\wt{L}_{p,x_0}$ can be seen as an operator on $\wt{\E}_{x_0}$. Note that our formulas will not depend on the choice of $S_L$ as the isomorphism $\End(\wt{\E}_{p,x_0})\simeq \End(\wt{\E}_{x_0})$ is canonical.

Let $dv_{\wt{TM}}$ be the Riemannian volume of $(\wt{M_0},\wt{g^{TM}}|_{0})$. Let $\wt{\kappa}$ be the smooth positive function defined by
\begin{equation}
\label{def-kappa}
dv_{\wt{M_0}}(\wt{Z}) = \wt{\kappa}(\wt{Z}) dv_{\wt{TM}}(\wt{Z}),
\end{equation}
with $\wt{\kappa}(0)=1$. Note that this definition is compatible with \eqref{def-kappa-intro} near 0, which will be \emph{in fine}  the only region of interest.

Let $R^{\wt{L}}$ be the Chern curvature of $(\wt{L},\wt{h^{L}})$. Let $\wt{\omega}_d$ and $\wt{\tau}$ be defined from $R^{\wt{L}}$ as $\omega_d$ and $\tau$  were defined from $R^L$ in \eqref{Eq-def-omega_d-tau}. Then $\wt{\omega}_d$ and $\wt{\tau}$ are in fact the lifts of $\omega_d$ and $\tau$. 

Recall that $\n_V$ is the ordinary differentiation operator on $\wt{M_0}=\C^n$ in the direction $V$.  

We will now make the change of parameter $t=\frac{1}{\sqrt{p}}\in\, ]0,1]$. 

\begin{definition}
For $s\in \smooth(\wt{M_0}, \E_{x_0})$ and $\tilde{Z}\in\C^{n}$ set
\begin{equation}
\label{Eq-def-rescaled}
\begin{aligned}
&(S_ts)(\tilde{Z})  = s(\tilde{Z}/t), \\
& \n_0= \nabla + \frac{1}{2} R^{\wt{L}}_{0}( \tilde{Z}, \cdot) , \\
&\wt{\LL}_{t}=   t^2 S_t^{-1}  \wt{\kappa}^{1/2} \wt{L}_{p,x_0} \wt{\kappa}^{-1/2} S_t, \\
&\wt{\LL}_{0} = - \frac{1}{2}\sum_{i=1}^{2n} \left(\n_{0,\wt{e_i}}\right)^2-\wt{\omega}_{d,0}-\frac{1}{2}\wt{\tau}_{0}.
\end{aligned}
\end{equation}
\end{definition}

Then, exactly as in \cite[Lemma 1.6.6]{ma-marinescu}, our constructions imply that $\wt{\LL}_t=\wt{\LL}_0+O(t)$. 

Let $e^{-u\wt{\LL}_{t}}(\wt{Z},\wt{Z}')$ be the smooth kernel of $e^{-u\wt{\LL}_{t}}$ with respect to $dv_{\wt{TM}}(\wt{Z}')$ (for $t>0$ or $t=0$). Now, as we are working on a vector space, we can apply all the results of \cite{MR2215454} (see also \cite[Section 4.2.2]{ma-marinescu}) to $\wt{\LL}_t$ and $\wt{\LL}_0$, and we get the following full off-diagonal convergence (see \cite{MR2215454} or \cite[Theorem 4.2.8]{ma-marinescu}).

\begin{theorem}
\label{Thm-cve-noyau-Lt}
There exist $C,C'>0$ and $N\in\N$ such that for any $m,m'\in \N$ and $u_0>0$, there are $C_{m,m'}>0$ and $N\in \N$ such that for any $t\in\, ]0,t_0]$, $u\geq u_0$ and $\wt{Z},\wt{Z}'\in \wt{M_0}$ with $|\wt{Z}|,|\wt{Z}'|\leq 1$
\begin{multline}
\label{Eq-cve-noyau-Lt}
\sup_{|\alpha|,|\alpha'|\leq m}  \left| \frac{\partial^{|\alpha|+|\alpha'|}}{\partial {\wt{Z}}^{\alpha}\partial \wt{Z}'^{\alpha'}} \left(e^{-u\wt{\LL}_{t}}-e^{-u\wt{\LL}_{0}
}\right)(\wt{Z},\wt{Z}')\right|_{\mathscr{C}^{m'}(M)} \\
\leq C_{m,m'}t\big(1+|\wt{Z}|+|\wt{Z}'|  \big)^{N} \exp\Big(C u -\frac{C'}{u}|\wt{Z}-\wt{Z}'|^2\Big),
\end{multline}
where $|\cdot|_{\mathscr{C}^{m'}(M)}$ denotes the $\mathscr{C}^{m'}$-norm with respect to the parameter $x_0\in M$ used to define the operators $\wt{\LL}_t$ and $\wt{\LL}_0$ on $\C^n$.
\end{theorem}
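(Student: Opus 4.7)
Since $\wt{M_0}=\C^n$ carries no remaining orbifold structure, the plan is to reduce Theorem~\ref{Thm-cve-noyau-Lt} to the off-diagonal heat kernel framework of Dai-Liu-Ma \cite{MR2215454} and \cite[Section 4.2.2]{ma-marinescu}, and invoke it essentially verbatim. The first step is to produce a Taylor expansion in $t$,
$$\wt{\LL}_t = \wt{\LL}_0 + \sum_{r\geq 1} t^r \mathcal{A}_r,$$
where each $\mathcal{A}_r$ is a differential operator of order $\leq 2$ with polynomial coefficients in $\wt{Z}$. This is obtained by Taylor-expanding $\wt{g^{TM_0}}$, the density $\wt{\kappa}$, the connection forms involving $\Gamma^L$ and $\Gamma^{\E}$, the curvature $R^{\wt{L}}$ and the endomorphism $\Phi_E$ about $\wt{Z}=0$, conjugating by $S_t$ and $\wt{\kappa}^{1/2}$, and collecting powers of $t=1/\sqrt{p}$; in particular $\wt{\LL}_t-\wt{\LL}_0 = O(t)$ coefficient-wise.

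Second, I would use a contour integral representation of the heat semigroup. The model $\wt{\LL}_0$ is a generalised harmonic oscillator with discrete spectrum bounded below, so one can fix a contour $\Gamma\subset \C$ encircling it and write
$$e^{-u\wt{\LL}_t} - e^{-u\wt{\LL}_0} = \frac{1}{2\pi\ic}\int_\Gamma e^{-u\lambda}(\lambda-\wt{\LL}_t)^{-1}(\wt{\LL}_t-\wt{\LL}_0)(\lambda-\wt{\LL}_0)^{-1}\, d\lambda.$$
On weighted Sobolev norms $\|\cdot\|_{t,k}$ adapted to $\wt{\LL}_t$ (using the rescaled connection $\n_0$ and multiplications by $1+|\wt{Z}|$), one shows that $(\lambda-\wt{\LL}_t)^{-1}$ is bounded uniformly in $t\in[0,t_0]$ and $\lambda\in\Gamma$. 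Combined with the a priori bound $\|\wt{\LL}_t-\wt{\LL}_0\|_{t,2\to t,0}=O(t)$ furnished by the expansion above, this gives the prefactor $t$ in \eqref{Eq-cve-noyau-Lt}.

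Third, I would convert these $L^2$ operator bounds into pointwise bounds on the Schwartz kernels. Sobolev embedding on $\wt{M_0}$ produces the $\mathscr{C}^m$-control in $(\wt{Z},\wt{Z}')$, while the polynomial weight $(1+|\wt{Z}|+|\wt{Z}'|)^N$ arises by commuting multiplications $\wt{Z}^\alpha$ through the resolvents in the same weighted scale. The Gaussian off-diagonal factor $\exp(-C'|\wt{Z}-\wt{Z}'|^2/u)$ is recovered by a finite-propagation-speed argument applied to a Dirac-type operator $\mathcal{D}_t$ with $\mathcal{D}_t^2=2\wt{\LL}_t$, or equivalently by conjugating both sides by $\exp(\pm\beta|\wt{Z}-\wt{Z}'|^2/u)$ and checking that the conjugated operators satisfy the same uniform estimates. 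The $\mathscr{C}^{m'}$-norm in the parameter $x_0\in M$ is then obtained by differentiating the whole construction of $\wt{\LL}_t$ in $x_0$ and iterating the argument with the $x_0$-derivatives in place of $\wt{\LL}_t-\wt{\LL}_0$.

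The main obstacle is the uniformity of the resolvent estimates down to $t=0^+$: the weighted norms must be tuned to the harmonic-oscillator geometry of $\wt{\LL}_0$ without losing the ellipticity of $\wt{\LL}_t$ for $t>0$, and the interaction between the Bismut connection $\n^{B,\E}$, the conjugation by $\wt{\kappa}^{1/2}$ and the rescaling $S_t$ must be tracked carefully. This is exactly the delicate analysis carried out in \cite{MR2215454} and \cite[Theorem 4.2.8]{ma-marinescu}, which can be applied here without modification since $\wt{\LL}_t$ now lives on a genuine vector space rather than on an orbifold chart.
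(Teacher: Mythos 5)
Your proposal correctly identifies the strategy, which is exactly what the paper does: once the rescaled operators $\wt{\LL}_t$ and $\wt{\LL}_0$ are realized on a genuine vector space $\wt{M_0}=\C^n$ (rather than on an orbifold chart), the paper simply invokes the off-diagonal heat/Bergman kernel estimates of Dai-Liu-Ma \cite{MR2215454} and \cite[Section 4.2.2, Theorem 4.2.8]{ma-marinescu} verbatim, and your sketch (Taylor expansion $\wt{\LL}_t=\wt{\LL}_0+O(t)$, resolvent/contour-integral representation with uniform estimates in weighted Sobolev scales, Sobolev embedding for the $\mathscr{C}^m$-control, finite propagation speed for the Gaussian factor, and differentiation in $x_0$) is an accurate account of what goes into those cited results. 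So you are taking essentially the same approach as the paper, just unpacking the citation.
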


\subsection{Conclusion}

From \eqref{Eq-def-rescaled}, a change of variable gives that 
\begin{equation}
e^{-\frac{u}{p}\wt{L}_{p,x_0}}(\wt{Z},\wt{Z}')=p^n e^{-u\wt{\LL}_t}(\wt{Z}/t,\wt{Z}'/t)\wt{\kappa}^{-1/2}(\wt{Z})\wt{\kappa}^{-1/2}(\wt{Z}').
\end{equation}
Thus, with Theorem \ref{Thm-cve-noyau-Lt}, we infer that for any multi-index $\alpha$ with $|\alpha|\leq m$ and for $|\wt{Z}|$ small, 
\begin{multline}
\label{cvce-noyau-Lp-Ep}
\left| \derpar{^{|\alpha|}}{\wt{Z}^\alpha} \left(p^{-n}e^{-\frac{u}{p}\wt{L}_{p,x_0}}(g^{-1}\wt{Z},\wt{Z}) - e^{-u\wt{\LL}_0}(\sqrt{p}g^{-1}\wt{Z},\sqrt{p}\wt{Z})\wt{\kappa}^{-1}(\wt{Z}) \right)\right|_{\mathscr{C}^{m'}(M)} \\
 \leq Cp^{\frac{m-1}{2}}\big(1+\sqrt{p}|\wt{Z}|  \big)^N e^{ -cp|\wt{Z}-g^{-1}\wt{Z}|^2}.
\end{multline}

We define $\dot{R}^{\wt{L}}\in \End(T^{(1,0)}\wt{M_0})$ by $g^{TM}(\dot{R}^{\wt{L}}u,\ol{v}) =R^{\wt{L}}(u,\ol{v})$ for $u,v\in T^{(1,0)}\wt{M_0}$. We extend $\dot{R}^{\wt{L}}$ to $T\wt{M_0}\otimes \C=T^{(1,0)}\wt{M_0}\oplus T^{(0,1)}\wt{M_0}$ by setting $\dot{R}^{\wt{L}}\ol{v}=-\ol{\dot{R}^{\wt{L}}v}$. Then $\ic\dot{R}^{\wt{L}}_{x}$ induces an anti-symmetric endomorphism of $T\wt{M_0}$. Then from the formula for the heat kernel of a harmonic oscillator (see \cite[(E.2.4), (E.2.5)]{ma-marinescu} for instance), we find:
\begin{multline}
\label{chaleur-L}
e^{-u\wt{\LL}_0}(g^{-1}\wt{Z},\wt{Z})=\frac{1}{(2\pi)^{n}}\frac{\det(\dot{R}_{0}^{\wt{L}})e^{u\wt{\omega}_{d,0}}}{\det\big(1-\exp(-u\dot{R}_{0}^{\wt{L}})\big)}\otimes \Id_{\wt{E}_0}\\
\times \exp\Bigg\{-\bigg\langle\frac{\dot{R}_{0}^{\wt{L}}/2}{\mathrm{th}(u\dot{R}_{0}^{\wt{L}}/2)} \wt{Z},\wt{Z} \bigg\rangle+\bigg\langle\frac{\dot{R}_{0}^{\wt{L}}/2}{\mathrm{sh}(u\dot{R}_{0}^{\wt{L}}/2)} e^{u\dot{R}_{0}^{\wt{L}}/2}g^{-1}\wt{Z},\wt{Z} \bigg\rangle\Bigg\}.
\end{multline}
Here, we use the convention that if an eigenvalue of $\dot{R}^{\wt{L}}_{0}$ is zero, then its contribution to the above term is $\wt{v}\mapsto \frac{1}{2\pi u}e^{-\frac{1}{2u}|g^{-1}\wt{v}-\wt{v}|^2}$.

We are now able to prove Theorems \ref{asymp-away-sing-Thm} and \ref{asymp-near-sing-Thm}.

\begin{proof}[Proof of Theorem  \ref{asymp-away-sing-Thm}]
If $x_0$ is in $M_{reg}$, we have $G_{x_0}=\{1\}$ and the objects with or without tildas coicindes. Thus, from \eqref{Eq-localisation-calcul-noyau}, \eqref{Eq-noyau-et-releve}, and \eqref{cvce-noyau-Lp-Ep}, \eqref{chaleur-L} applied at $\wt{Z}=0$, we get Theorem \ref{asymp-away-sing-Thm}.
  \end{proof}

\begin{proof}[Proof of Theorem  \ref{asymp-near-sing-Thm}]
We will use here the notations given in the introduction of this paper (before the statement of Theorem  \ref{asymp-near-sing-Thm}). In Particular, for $g\in G_{x_0}$, we have a decomposition $\wt{Z}=(\wt{Z}_{1,g},\wt{Z}_{2,g})$ where $\wt{Z}_{1,g}$ is in the fixed-point set of $g$ and $\wt{Z}_{2,g}$ is in the normal bundle of this set.

Let us fix $g\in G_{x_0}$. The idea is to apply the results of this Section \ref{cvce-noyau-Sect} but replacing the base-point $0\in \wt{U}_{x_0}$ by $\wt{Z}_{1,g}$. In order to stress on the dependence on $\wt{Z}_{1,g}$, we will add subscript to the various objects introduced above but defined with the base-point $\wt{Z}_{1,g}$, e.g., $\kappa_{\wt{Z}_{1,g}}$, $\wt{\LL}_{0,\wt{Z}_{1,g}}$, etc... We can then make \eqref{cvce-noyau-Lp-Ep} more precise: observe that there is $c_0>0$ such that for each $g\in G_{x_0}$, $|\wt{Z}_{2,g}-g^{-1}\wt{Z}_{2,g}|\geq c_0 |\wt{Z}_{2,g}|$, and thus for $m,\ell \in \N$ and $|\alpha'|\leq \ell$ we have some constants $c,C>0$ such that
\begin{multline}
\label{cvce-noyau-Lp-avec_g-Ep}
\sup_{|\alpha|\leq m}\left| \derpar{^{|\alpha|}}{\wt{Z}^\alpha_{1,g}}\derpar{^{|\alpha'|}}{\wt{Z}^{\alpha'}_{2,g}} \left(p^{-n}e^{-\frac{u}{p}\wt{L}_{p,x_0}}(g^{-1}\wt{Z},\wt{Z}) - e^{-u\wt{\LL}_{0,\wt{Z}_{1,g}}}(\sqrt{p}g^{-1}\wt{Z}_{2,g},\sqrt{p}\wt{Z}_{2,g})\wt{\kappa}_{\wt{Z}_{1,g}}^{-1}(\wt{Z}_{2,g}) \right)\right| \\
 \leq Cp^{\frac{\ell-1}{2}}\big(1+\sqrt{p}|\wt{Z}_{2,g}|  \big)^N e^{ -cp|\wt{Z}_{2,g}|^2}.
\end{multline}
In particular, we find that if $g=1$ then $\wt{Z}=\wt{Z}_{1,g}$ and $\wt{Z}_{2,g}=0$ and thus
\begin{equation}
\label{cvce-noyau-Lp-g=1-Ep}
\sup_{|\alpha|\leq m}\left| \derpar{^{|\alpha|}}{\wt{Z}^\alpha} \left(p^{-n}e^{-\frac{u}{p}\wt{L}_{p,x_0}}(\wt{Z},\wt{Z}) -  e^{-u\wt{\LL}_{0,\wt{Z}}}(0,0)\right)\right|  \leq Cp^{-1/2}.
\end{equation}

 Note that the image in the quotient of the union $\cup_{g\neq1}\wt{U}_{x_0}^g$ is precisely $ M_{sing}\cap U_{x_0}$. In particular, if $Z$ is the image of $\wt{Z}$, then we have  $|\wt{Z}_{2,g}|\geq d(Z,M_{sing})$ for $g\in G_{x_0}\setminus\{1\}$. From this remark and equations \eqref{Eq-localisation-calcul-noyau}, \eqref{Eq-noyau-et-releve}, \eqref{cvce-noyau-Lp-avec_g-Ep} and \eqref{cvce-noyau-Lp-g=1-Ep}  we find that 
\begin{multline}
\label{cvce-noyau-square_p-Ep}
\sup_{|\alpha|\leq \ell} \Bigg|\derpar{^{|\alpha|}}{\wt{Z}^\alpha}\Bigg(p^{-n}e^{-\frac{u}{p}\square_p}(\wt{Z},\wt{Z})-e^{-u\wt{\LL}_{0,\wt{Z}}}(0,0)\\
 -\sum_{\substack{g\in G_{x_0} \\ g\neq 1}}(g,1). e^{-u\wt{\LL}_{0,\wt{Z}_{1,g}}}(\sqrt{p}g^{-1}\wt{Z}_{2,g},\sqrt{p}\wt{Z}_{2,g})\wt{\kappa}_{\wt{Z}_{1,g}}^{-1}(\wt{Z}_{2,g})\Bigg)\Bigg|\\
 \leq Cp^{-1/2}+ Cp^{\frac{\ell-1}{2}}(1+\sqrt{p}\,d(Z,M_{sing}))^Ne^{-cp\,d(Z,M_{sing})^2}.
\end{multline}

To conclude, we get Theorem  \ref{asymp-near-sing-Thm} thanks to the definiton of $e^{i\theta_g}$ and $g^E$, and \eqref{def-Lim-Eq}, \eqref{def-E-Eq}, \eqref{chaleur-L} and \eqref{cvce-noyau-square_p-Ep}, noticing that $\dot{R}^{\wt{L}}$  and $\wt{\omega}_{d}$ coincide with the invariant lifts of $\dot{R}^{L}$ and $\omega_{d}$.
  \end{proof}

%%%%%%%%%%%%%%%%%%%%%%%%%%%%%%%%%%%%%%%%%%%%%%%%%%%%%%%%%%%%%%%%%%%%%%%%%%%%%%%%%%%%%%%%%%%%%%%%%%%%%%%%%%%%%%%%%%%%%%%%%%%%%%%%%%%%%%%%%%%%%%%%%%%%%%%%%%%%%%%%%%%%%%%%%%%%%%%%%%%%%%%%%%%%%%%%%%%%%%%%%%%%%%%%%%%%%%%%%%%%%%%%%%%%%%%%%%%%%%%%%%%%%%%%%%%%%%%%%%%%%%%%%%%%%%%%%%%%%%%%%%%%%%%
\section{Proof of the inequalities}
\label{Sect-proof}

In this section, we will first prove Theorem  \ref{Euler-et-chaleur}, and then show how to use it in conjunction with the convergence of the heat kernel proved in Section \ref{cvce-noyau-Sect} to get Theorem \ref{OHMI}. The method is inspired by \cite{MR886814} (see also \cite[Section 1.7]{ma-marinescu}).

\begin{proof}[Proof of Theorem  \ref{Euler-et-chaleur}]
If $\lambda$ is an eigenvalue of $\square_p$ acting on $\Omega^{0,j}(M,L^p\otimes E)$, we denote by $F_j^\lambda$ the corresponding finite-dimensional eigenspace. As $\db^{L^p\otimes E}$ and $\db^{L^p\otimes E,*}$  commute with $\square_p$, we deduce that 
\begin{equation}
\db^{L^p\otimes E}(F^\lambda_j)\subset F^\lambda_{j+1} \qquad \text{and} \qquad  \db^{L^p\otimes E,*}(F^\lambda_j)\subset F^\lambda_{j-1}.
\end{equation}
 
As a consequence, we have a complexe
\begin{equation}
\label{complexe-espace-propre}
0\longrightarrow F^\lambda_0 \overset{\db^{L^p\otimes E}}{\longrightarrow}F^\lambda_1 \overset{\db^{L^p\otimes E}}{\longrightarrow} \cdots \overset{\db^{L^p\otimes E}}{\longrightarrow}F^\lambda_n \longrightarrow 0.
\end{equation}
If $\lambda=0$, we have  $F^0_j \simeq  H^j(M,L^p\otimes E)$ by Theorem \ref{Thm-Hodge}. If $\lambda>0$, then the complex \eqref{complexe-espace-propre} is exact. Indeed, if $\db^{L^p\otimes E}s=0$  and $s\in F^\lambda_j$, then
\begin{equation}
s=\lambda^{-1}\square_ps = \lambda^{-1}\db^{L^p\otimes E}\db^{L^p\otimes E,*}s \in \Im(\db^{L^p\otimes E}).
\end{equation}
In particular, we get for $\lambda>0$ and $0\leq q \leq n$
\begin{equation}
\label{sum-dim(Flambdaj)}
\sum_{j=0}^q(-1)^{q-j}\dim \, F^\lambda_j = \dim \big( \db^{L^p\otimes E}(F^\lambda_q) \big)\geq 0,
\end{equation}
with equality if $q=n$. 

Now, by Theorem \ref{Thm-Hodge}
\begin{equation}
\label{trace-et-vap}
\tr|_{\Omega^j}[e^{-\frac{u}{p}\square_p}]= \dim(H^j(M,L^p\otimes E) +\sum_{\lambda>0}e^{-\frac{u}{p}\lambda}\dim\, F^\lambda_j.
\end{equation}
Finally, \eqref{sum-dim(Flambdaj)} and \eqref{trace-et-vap} entail \eqref{Euler-et-chaleur-eq}.
  \end{proof}

We can now conclude.
\begin{proof}[Proof of Theorem  \ref{OHMI}]
Let $\{x_i\}_{1\leq i \leq m}$ be a finite set of points of $M_{sing}$ such that the corresponding local charts $(G_{x_i},\wt{U}_{x_i})$ (as in Lemma \ref{Lem-carte-lineaire}) with $\wt{U}_{x_i}\subset \C^n$ satisfy 
\begin{equation}
B^{\wt{U}_{x_i}}(0,2\e)\subset \wt{U}_{x_i} \quad \text{and} \quad M_{sing}\subset \bigcup_{i=1}^m  W_i\,, \quad  W_i:=B^{\wt{U}_{x_i}}\big(0,\frac{\e}{4}\big)/G_{x_i}.
\end{equation}
Here $\e$ is as in Section \ref{cvce-noyau-Sect}. Let $W_0$ be an open neighborhood of the complementary of $\bigcup_{i=1}^m W_i$ wich is relatively compact  in $M_{reg}$. Let $\{\psi_k\}_{0\leq k \leq m}$ be a partition of the unity subordinated to $\{W_k \}_{ 0\leq k\leq m}$.

In the sequel, we denote by $\tr_{\Lambda^{0,q}}$ the trace on either $\Lambda^{0,q}(T^*M)\otimes L^p \otimes E$ or $\Lambda^{0,q}(T^*M)$. For $0\leq q \leq n$, we have
\begin{equation}
\label{trace-comme-int-de-trace}
\begin{aligned}
\tr_q\big[e^{-\frac{u}{p}\square_p}\big]&=\int_{M} \tr_{\Lambda^{0,q}}[e^{-\frac{u}{p}\square_p}(x,x)]dv_M(x)\\
&=\sum_{k=0}^m \int_{M} \psi_k(x)\tr_{\Lambda^{0,q}}[e^{-\frac{u}{p}\square_p}(x,x)]dv_M(x).
\end{aligned}
\end{equation}

From Theorem  \ref{asymp-away-sing-Thm}, we know that for $p\to \infty$
\begin{multline}
\label{trace-avec-k=0-Eq}
p^{-n}\int_{M} \psi_0(x)\tr_{\Lambda^{0,q}}[e^{-\frac{u}{p}\square_p}(x,x)]dv_M(x) = \\
 \frac{\mathrm{rk}(E)}{(2\pi)^{n}}\int_M \psi_0(x) \frac{\det(\dot{R}_{x}^{L})\tr_{\Lambda^{0,q}}[e^{u\omega_{d,x}}]}{\det\big(1-\exp(-u\dot{R}_{x}^{L})\big)}dv_M(x)+o(1).
\end{multline}

For $1\leq k \leq m$, we know from Theorem  \ref{asymp-near-sing-Thm} that for $p\to \infty$
\begin{multline}
\label{trace-avec-k>0-Eq}
p^{-n}\int_{M} \psi_k(x)\tr_{\Lambda^{0,q}}[e^{-\frac{u}{p}\square_p}(x,x)]dv_M(x) = \\
 \sum_{\substack{g\in G_{x_k} \\ g\neq 1}}\frac{1}{|G_{x_k}|}\int_{\{|\wt{Z}|\leq \frac{\e}{4}\}} \psi_k(\wt{Z})e^{ip\theta_g}g^{E}(\wt{Z}_{1,g})\mathcal{L}im_u(\wt{Z}_{1,g})\mathcal{E}_{g,\wt{Z}_{1,g}}(u,\sqrt{p}\wt{Z}_{2,g})dv_{\wt{TM}}(\wt{Z}) \\
 +\frac{\mathrm{rk}(E)}{(2\pi)^{n}}\int_M \psi_k(x) \frac{\det(\dot{R}_{x}^{L})\tr_{\Lambda^{0,q}}[e^{u\omega_{d,x}}]}{\det\big(1-\exp(-u\dot{R}_{x}^{L})\big)}dv_M(x)+o(1).
\end{multline}

However, for $g\neq 1$, observe that
\begin{multline}
\label{Eq-int-negligeable}
\int_{\{|\wt{Z}|\leq \frac{\e}{4}\}} \psi_k(\wt{Z})e^{ip\theta_g}g^{E}(\wt{Z}_{1,g})\mathcal{L}im_u(\wt{Z}_{1,g})\mathcal{E}_{g,\wt{Z}_{1,g}}(u,\sqrt{p}\wt{Z}_{2,g})dv_{\wt{TM}}(\wt{Z}) \\
= p^{-\frac{\dim N_{x_k,g}}{2}}\int_{A(p,\e)} \psi_k\Big(\big(\wt{Z}_{1,g},\frac{\wt{Z}'_{2,g}}{\sqrt{p}}\big)\Big)e^{ip\theta_g}g^{E}(\wt{Z}_{1,g})\mathcal{L}im_u(\wt{Z}_{1,g})\mathcal{E}_{g,\wt{Z}_{1,g}}(u,\wt{Z}'_{2,g})dv_{\wt{TM}}(\wt{Z}), 
\end{multline}
where $A(p,\e)=\{|\wt{Z}_{1,g}|^2+\frac{1}{p} |\wt{Z}'_{2,g}|^2\leq \frac{\e}{4}\}$. Now, as we have $\langle (1-g^{-1})\wt{Z}'_{2,g},\wt{Z}'_{2,g}\rangle \geq c_1 |\wt{Z}'_{2,g}|$ with $c_1>0$, we can see that $\mathcal{E}_{g,\wt{Z}_{1,g}}(u,\wt{Z}'_{2,g})$ is exponentially decaying as $|\wt{Z}'_{2,g}|\to\infty$. Thus, there is $C>0$ such that
\begin{equation}
\left| \int_{A(p,\e)} \psi_k\Big(\big(\wt{Z}_{1,g},\frac{\wt{Z}'_{2,g}}{\sqrt{p}}\big)\Big)e^{ip\theta_g}g^{E}(\wt{Z}_{1,g})\mathcal{L}im_u(\wt{Z}_{1,g})\mathcal{E}_{g,\wt{Z}_{1,g}}(u,\wt{Z}'_{2,g})dv_{\wt{TM}}(\wt{Z}) \right| \leq C.
\end{equation}
as a consequence,  since  $\dim N_{x_k,g}>0$ for $g\neq 1$, we deduce that all the integrals \eqref{Eq-int-negligeable}  for $1\leq k\leq m$ are $o(1)$ as $p\to \infty$. Thus, at the end, \eqref{trace-avec-k>0-Eq} turns out to reduce to
\begin{multline}
\label{trace-avec-k>0-simple-Eq}
p^{-n}\int_{M} \psi_k(x)\tr_{\Lambda^{0,q}}[e^{-\frac{u}{p}\square_p}(x,x)]dv_M(x) = \\
 \frac{\mathrm{rk}(E)}{(2\pi)^{n}}\int_M \psi_k(x) \frac{\det(\dot{R}_{x}^{L})\tr_{\Lambda^{0,q}}[e^{u\omega_{d,x}}]}{\det\big(1-\exp(-u\dot{R}_{x}^{L})\big)}dv_M(x)+o(1).
\end{multline}

From \eqref{trace-comme-int-de-trace}, \eqref{trace-avec-k=0-Eq} and \eqref{trace-avec-k>0-simple-Eq}, we find that for $p\to \infty$,
\begin{equation}
\label{asymp-trace-Eq}
p^{-n}\tr_q[e^{-\frac{u}{p}\square_p}] =  \frac{\mathrm{rk}(E)}{(2\pi)^{n}}\int_M \frac{\det(\dot{R}_{x}^{L})\tr_{\Lambda^{0,q}}[e^{u\omega_{d,x}}]}{\det\big(1-\exp(-u\dot{R}_{x}^{L})\big)}dv_M(x)+o(1).
\end{equation}

On the other hand, for $x\in X$, let $\{\wt{w}_j\}_{1\leq j\leq n}$ be an local orthonormal frame  of $\wt{T^{1,0}M}_{U_x}$ such that $\dot{R}^L\wt{w}_j=a_j(\wt{Z})\wt{w}_j$ and let $\{\wt{w}^j\}_{1\leq j\leq n}$ be its dual basis. Then we have the following formula on $\wt{U}_x$:
\begin{equation}
\begin{aligned}
& \omega_d(\wt{Z}) = -\sum_{j=0}^n a_j(\wt{Z})\ol{\wt{w}}^j\wedge i_{\ol{\wt{w}}_j}, \\
& e^{u\omega_d(\wt{Z})} = \prod_{j=0}^n \big(1+(e^{-ua_j(\wt{Z})}-1)\big)\ol{\wt{w}}^j\wedge i_{\ol{\wt{w}}_j}, \\
& \frac{\det(\dot{R}_{x}^{L})\tr_{\Lambda^{0,q}}[e^{u\omega_{d,x}}]}{\det\big(1-\exp(-u\dot{R}_{x}^{L})\big)} = \bigg(\sum_{j_1<\dots <j_q}e^{-u \sum_{k=1}^q a_{j_k}(x)}\bigg)\prod_{j=0}^n \frac{a_j(x)}{\big( 1-e^{-ua_j(x)}\big)}.
\end{aligned}
\end{equation}
In particular, the term in the integral in the right-hand side of \eqref{asymp-trace-Eq} is uniformly bounded for $x\in M$ and $u>0$, and moreover,
\begin{equation}
\label{lim-u->infini}
\lim_{u\to \infty} \frac{1}{(2\pi)^{n}}\frac{\det(\dot{R}_{x}^{L})\tr_{\Lambda^{0,q}}[e^{u\omega_{d,x}}]}{\det\big(1-\exp(-u\dot{R}_{x}^{L})\big)}  = (-1)^q \boldsymbol{1}_{M(q)}(x)\det (\dot{R}^L_x/2\pi),
\end{equation}
where $\boldsymbol{1}_{M(q)}$ denotes the indicator function of $M(q)$. 

From Theorem  \ref{Euler-et-chaleur} and \eqref{asymp-trace-Eq}, we have for $0\leq q \leq n$ and any $u>0$
\begin{multline}
 \limsup_{p\to \infty} p^{-n} \sum_{j=0}^q (-1)^{q-j}\dim H^j(M,\l^p \otimes E) \\
 \leq \frac{\mathrm{rk}(E)}{(2\pi)^{n}}\int_M \sum_{j=0}^q\frac{\det(\dot{R}_{x}^{L})\tr_{\Lambda^{0,q}}[e^{u\omega_{d,x}}]}{\det\big(1-\exp(-u\dot{R}_{x}^{L})\big)}dv_M(x).
\end{multline}
This, together with \eqref{lim-u->infini} and dominated convergence for $u\to \infty$, gives
\begin{equation}
 \limsup_{p\to \infty} p^{-n} \sum_{j=0}^q (-1)^{q-j}\dim H^j(M,\l^p \otimes E) \leq (-1)^q \mathrm{rk}(E)\int_{M(\leq q)}\det (\dot{R}^L_x/2\pi) dv_M(x).
\end{equation}
Finally, we have
\begin{equation}
\det (\dot{R}^L_x/2\pi) dv_M(x) = \frac{1}{n!}\big( \frac{\ic}{2\pi}R^{L}\big)^{n},
\end{equation}
which conclude the proof of Theorem  \ref{OHMI}.
  \end{proof}

%%%%%%%%%%%%%%%%%%%%%%%%%%%%%%%%%%%%%%%%%%%%%%%%%%%%%%%%%%%%%%%%%%%%%%%%%%%%%%%%%%%%%%%%%%%%%%%%%%%%%%%%%%%%%%%%%%%%%%%%%%%%%%%%%%%%%%%%%%%%%%%%%%%%%%%%%%%%%%%%%%%%%%%%%%%%%%%%%%%%%%%%%%%%%%%%%%%%%%%%%%%%%%%%%%%%%%%%%%%%%%%%%%%%%%%%%%%%%%%%%%%%%%%%%%%%%%%%%%%%%%%%%%%%%%%%%%%%%%%%%%%%%%%
\section{Moishezon orbifolds}
\label{Sect-moishezon}

In this section, we introduce the concept of Moishezon orbifold, in a similar way as in the smooth case, and we give a criterion for a compact connected orbifold to be Moishezon. We thus prove that the Siu's \cite{MR755233,MR797421} and Demailly's \cite{demailly} answers to the Grauert-Riemenschneider conjecture \cite{MR0302938} are still valid in the orbifold case. We follow here the same lines as the presentation of these results given in \cite[Section 2.2]{ma-marinescu}.

In all this section, we consider a compact connected complex orbifold $M$, with set of orbifold charts $\mathcal{U}$. Also, as we mentioned before, we may assume without loss of generality that all the vector bundles in this section are proper.

%%%%%%%%%%%%%%%%%%%%%%%%%%%%%%%%%%%%%%%%%%%%%%%%%%%%%%%%%%%%%%%%%%%%%%%%%%%%%%%%%%%%%%%%%%%%%%%
\subsection{Definition of Moishezon orbifolds}
\label{sect-def-Moishezon}

Let $\mathscr{S}_M \subset \0_M$ be the subsheaf such that for every open $U$, $\mathscr{S}_M(U)$ consists of the functions $f\in \0_M(U)$ which do not vanish identically on any connected components of $U$.

\begin{definition}
 Let $\M$ be the sheaf associated with the pre-sheaf $U\mapsto \mathscr{S}_M(U)^{-1} \0_M(U)$. The section of $\M$ over an open set $U$ are called the \emph{meromorphic functions} on $U$.
\end{definition}

By definition, $f\in \M(U)$ can be written in a (connected) neighborhood $V\in \mathcal{U}$ of any point as $f=g/h$ with $g\in \0_M(V)$ and $h\in \mathscr{S}_M(V)$. On such a neighborhood, we thus have two $G_V$-invariant holomorphic functions $\tilde{g},\, \tilde{h}$ on $\wt{V}$, with $\tilde{h}\not\equiv 0$, such that the covering $\tilde{f}$ of $f$ is given by $\tilde{f}=\tilde{g}/\tilde{h}$. Note that it is \emph{a priori} stronger than asking that $f$ is covered by a $G_V$-invariant meromorphic function on $\wt{V}$. \\

\begin{definition}
We say that $f_1,\dots,f_k \in \M(M)$ are \emph{algebraically independent} if for any polynomial $P\in \C[z_1,\dots, z_k]$,  $P(f_1,\dots,f_k)=0$ implies $P=0$. 

 The \emph{transcendence degree} of $\M(M)$ over $\C$ is the maximal number of algebraically independent meromorphic functions on $M$. We denoted it by  $a(M)$.
\end{definition}

By a theorem of Cartan and Serre \cite{cartan} (see \cite{MR0084174}), we know that $(M, \0_M)$ is an complex space, and thus we have $a(M)\leq \dim M$.

\begin{definition}
 The compact connected orbifold $M$ is called a \emph{Moishezon orbifold} if it possesses $\dim M$ algebraically independent meromorphic functions, that is if $a(M)=\dim M$.
\end{definition}

%%%%%%%%%%%%%%%%%%%%%%%%%%%%%%%%%%%%%%%%%%%%%%%%%%%%%%%%%%%%%%%%%%%%%%%%%%%%%%%%%%%%%%%%%%%%%%%
\subsection{The Koidaira map and big line bundles}
\label{sect-koidaira-map}

Let $L$ be a Hermitian orbifold line bundle on $M$. Let $\mathbb{P}^*(H^0(M,L))$ be the Grassmannian of hyperplans of $H^0(M,L)$ (which can be identified to $\mathbb{P}(H^0(M,L)^*)$). 

As for smooth compact manifold, we define the \emph{base point locus} $\mathrm{BL}_{H^0(M,L)}$ of $H^0(M,L)$ as 
\begin{equation}
\mathrm{BL}_{H^0(M,L)}=\{x\in M \: : \: \forall \, s\in H^0(M,L), \, s(x)=0\},
\end{equation}
and we define the \emph{Kodaira map} by
\begin{equation}
\begin{aligned}
 \Phi_L \colon &M\setminus \mathrm{BL}_{H^0(M,L)} \to \mathbb{P}^*(H^0(M,L)) \\
  &x \mapsto \{ s\in H^0(M,L) \: : \:s(x)=0\}.
\end{aligned}
\end{equation}

If we chose a basis $(s_1,\dots,s_d)$ of $H^0(M,L)$, then we have the following description of $\Phi_L$: we use this basis to identify $H^0(M,L)\simeq \C^d$ and $\mathbb{P}^*(H^0(M,L))\simeq \mathbb{P}^{d-1}(\C)$, and under this identification we have
\begin{equation}
\label{eq-def-Kodaira-map}
\Phi_L (x) = [s_1(x),\dots, s_d(x)].
\end{equation}
This notation is slightly abusive because $s_i(x) \in L_x$. What is meant in \eqref{eq-def-Kodaira-map} is that if we chose a local basis $e_L$ of $L$ we can write $s_i(x)= f_i(x)e_L$ and then $\Phi_L (x) = [f_1(x),\dots, f_d(x)]$, but this element of $\mathbb{P}^{d-1}(\C)$ does not depend on $e_L$, which justifies the abuse of notation. In fact, on $\{s_i\neq 0\}$ we have $\Phi_L (x) = [s_1(x)/s_i(x),\dots, s_d(x)/s_i(x)]$.

In a local chart $(G_x,\wt{U}_x)$ near $x\in M$, the sections $s_i$ are covered by $G_x$-equivariant holomorphic sections $\tilde{s}_i \colon \wt{U}_x \to \wt{L}_{U_x}$ and the $G_x$-invariant cover $\wt{\Phi}_L\colon \wt{U}_x \to \mathbb{P}^{d-1}(\C)$ of $\Phi_L$ is given by $\wt{\Phi}_L (x) = [\tilde{s}_1(x),\dots, \tilde{s}_d(x)]$. From this, we see that $\Phi_L$ is an orbifold holomorphic function on $M$. \\

\begin{definition}
 For $p\in \N^*$, let $\varrho_p = \max \{ \mathrm{rk}_x \, \Phi_{L^p} \}_{x\in M_{reg}\setminus \mathrm{BL}_{H^0(M,L^p)}}$. 
 
 The \emph{Kodaira-Itaka} dimension $\kappa(L)$ of $L$ is $\kappa(L)= \max \{ \varrho_p \}_{p\in \N^*}$, and the line bundle $L$ is caled \emph{big} if $\kappa(L)=\dim M$.
\end{definition}

We have the following characterization of big line bundles.
\begin{theorem}
\label{thm-chara-big-line-bundle}
 Let $M$ be a compact connected complex orbifold of dimension $n$ and let $L$ be a holomorphic orbifold line bundle on $M$. Then $L$ is big if and only if
 \begin{equation}
\limsup_{p\to \infty} p^{-n} \dim H^0(X,L^p) >0.
\end{equation}
\end{theorem}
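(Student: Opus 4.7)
The plan is to prove each implication separately, leveraging the orbifold version of Siegel's Lemma (Theorem \ref{thm-Siegel-orbifold}) announced in the introduction, which asserts that $\dim H^0(M,L^p) = O(p^{\kappa(L)})$ as $p \to \infty$. Everything reduces to this estimate plus a straightforward construction of many sections at a regular point of the Kodaira map.

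For the implication $(\Leftarrow)$, I would argue directly. Suppose $\limsup_{p\to\infty} p^{-n}\dim H^0(M,L^p) > 0$. Comparing with the upper bound $\dim H^0(M,L^p)\leq C p^{\kappa(L)}$ from Siegel's Lemma and letting $p\to\infty$ along a subsequence realizing the $\limsup$ forces $\kappa(L)\geq n$. Since by definition $\kappa(L)\leq \dim M = n$, we conclude $\kappa(L)=n$, i.e., $L$ is big.

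For the implication $(\Rightarrow)$, I would proceed constructively. Assume $L$ is big, so there exists $p_0 \in \N^*$ and a regular point $x_0 \in M_{reg}\setminus \mathrm{BL}_{H^0(M,L^{p_0})}$ at which $\Phi_{L^{p_0}}$ has rank $n$. Choose sections $s_0,s_1,\ldots,s_n \in H^0(M,L^{p_0})$ with $s_0(x_0)\neq 0$ such that the meromorphic functions $f_j := s_j/s_0$, holomorphic in a neighborhood of $x_0$, satisfy $df_1 \wedge \cdots \wedge df_n \neq 0$ at $x_0$; the differential of $\Phi_{L^{p_0}}$ having rank $n$ at $x_0$ makes this possible. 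Then $(f_1,\ldots,f_n)$ is a system of local holomorphic coordinates at $x_0$, so the monomials $\{f^\alpha\}_{|\alpha|\leq k}$ are linearly independent as holomorphic germs at $x_0$, and a fortiori as meromorphic functions on $M$. For each multi-index $\alpha\in \N^n$ with $|\alpha|\leq k$, the product
\begin{equation*}
s^\alpha := s_0^{k-|\alpha|}\, s_1^{\alpha_1}\cdots s_n^{\alpha_n}
\end{equation*}
lies in $H^0(M,L^{k p_0})$ and satisfies $s^\alpha / s_0^k = f^\alpha$; the linear independence of the $f^\alpha$ then implies linear independence of the $s^\alpha$. Hence $\dim H^0(M,L^{kp_0}) \geq \binom{n+k}{n} \sim k^n/n!$, which gives $\limsup_{p\to\infty} p^{-n}\dim H^0(M,L^p)\geq (n!\, p_0^n)^{-1}>0$.

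The main obstacle is therefore Siegel's Lemma itself in the orbifold context, which has to handle the local $G_U$-equivariant structure of meromorphic sections; this is precisely what Theorem \ref{thm-Siegel-orbifold} is set up to do. Once that is in hand, the rest of the argument above is carried out at a regular point $x_0\in M_{reg}$, so the local theory of holomorphic function germs on $\C^n$ applies without modification and no further orbifold-specific difficulty appears.
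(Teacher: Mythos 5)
Your proposal is correct and follows essentially the same approach as the paper: the $(\Leftarrow)$ direction is the orbifold Siegel lemma (Theorem \ref{thm-Siegel-orbifold}), and the $(\Rightarrow)$ direction builds $\binom{n+k}{n}$ linearly independent sections of $H^0(M,L^{kp_0})$ from the monomials in the local coordinates $s_j/s_0$ at a regular point where the Kodaira map has maximal rank. Your version is a slightly more explicit rewording of the paper's argument (listing the monomials $s^\alpha$ instead of invoking generic homogeneous polynomials $Q(s_0,\dots,s_n)$), but the underlying ideas and the key lemma are identical.
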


For the proof this theorem, we will need a version of Siegel's lemma \cite{MR0074061} (see also \cite[Lemma 2.2.6]{ma-marinescu}) for orbifolds. We will prove it first, drawing our inspiration from the approach of Andreotti~\cite{MR0152674}.

Let $x\in M$ and $r>0$ small. Let $(G_y,\wt{U}_y)$ be an orbifold chart as in Lemma \ref{Lem-carte-lineaire}, such that $x\in U_y$. For a pullback $\tilde{x}\in \wt{U}_y$ of $x$, we denote by $\wt{P}(\tilde{x},r)= \{ z\in \wt{U}_y \: : \: |z_i-\tilde{x}_i|\leq r \text{ for } 1\leq i\leq n\}$ the polydisc in $\wt{U}_y$ of center $\tilde{x}$ and of radius $r$ and  by $P(x,r)$ its image in $M$. The Shilov boundary $\wt{S}(\tilde{x},r)$ of $\wt{P}(\tilde{x},r)$ is defined by $\wt{S}(\tilde{x},r)=\{ z\in \wt{U}_x \: :\: |z_i-\tilde{x}_i|= r \text{ for } 1\leq i\leq n\}$, and its image in $M$ is denoted by $S(x,r)$. 

We warn the reader that the notations $P(x,r)$ and $S(x,r)$ can be somewhat misleading because these sets depend on the choice of $(G_y,\wt{U}_y)$ and of $\tilde{x}$, but it will not matter in the following proofs. 

In the sequel, if  $(G,\wt{U})$ is a chart and $X\subset \wt{U}$, we will set
\begin{equation}
\label{def-[A]}
[X] = \bigcup_{g\in G} g.X.
\end{equation}

\begin{lemma}
\label{lem-before-Siegel}
Let $M$ be a compact connected complex orbifold of dimension $n$ and let $L$ be a holomorphic orbifold line bundle on $M$. Consider points $x_1,\dots, x_m$ in $M_{reg}$ and positive numbers $r_1, \dots, r_m$. Using the above notations, we assume that $\wt{L}_{U_{y_i}}|_{\left[\wt{P}(\tilde{x}_i,2r_i)\right]}$ is (equivariantly) trivial for $1\leq i \leq m$ and $M\subset \cup_{i=1}^mP(x_i,e^{-1}r_i)$. Then there exists $k\in \N$ such that if $s\in H^0(M,L)$ vanishes at each $x_i$ up to order $k$, then $s=0$. In particular, $\dim H^0(M,L) \leq m \binom{n+k}{k}$.
\end{lemma}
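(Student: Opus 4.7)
The plan is to adapt the classical Siegel--Andreotti Schwarz-lemma argument to the orbifold setting; the orbifold structure is essentially transparent here because the centers $x_i$ all lie in $M_{\mathrm{reg}}$. \textbf{Setup.} Since $x_i \in M_{\mathrm{reg}}$, after possibly shrinking the $r_i$ I may assume that each polydisc $\wt{P}(\tilde{x}_i, 2r_i) \subset \wt{U}_{y_i}$ embeds biholomorphically into $M$, and I identify the two. Using the triviality of $\wt{L}|_{\wt{P}(\tilde{x}_i, 2r_i)}$, fix trivializing holomorphic sections $e_i$, so that any $s \in H^0(M,L)$ reads on each polydisc as $s = f_i e_i$ with $f_i$ holomorphic. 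Fixing any Hermitian metric $h^L$ on $L$, the compactness of $M$ and the finiteness of the cover produce a constant $C \geq 1$ such that $|e_i(\tilde{z})|_{h^L}/|e_j(\tilde{z}')|_{h^L} \leq C$ whenever $\tilde{z} \in \wt{P}(\tilde{x}_i, r_i)$ and $\tilde{z}' \in \wt{P}(\tilde{x}_j, r_j)$ project to the same point of $M$.

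\textbf{Schwarz iteration.} Set $N(s) := \max_i \sup_{\wt{P}(\tilde{x}_i, r_i)} |f_i|$. If $s$ vanishes at each $x_i$ to order $k$, then since $e_i(\tilde{x}_i) \neq 0$ the function $f_i$ vanishes at $\tilde{x}_i$ to the same order, so its Taylor expansion starts at degree $k+1$. The polydisc Schwarz lemma on $\wt{P}(\tilde{x}_i, r_i)$ gives
$$\sup_{\wt{P}(\tilde{x}_i, e^{-1} r_i)} |f_i| \;\leq\; e^{-(k+1)} \sup_{\wt{P}(\tilde{x}_i, r_i)} |f_i| \;\leq\; e^{-(k+1)} N(s).$$
The maximum defining $N(s)$ is attained at some $(i_0, \tilde{z}^*)$ with $\tilde{z}^* \in \overline{\wt{P}(\tilde{x}_{i_0}, r_{i_0})}$, and by the covering hypothesis its image $z^* \in M$ lies in some $P(x_j, e^{-1} r_j)$. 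Comparing the two trivializations at $z^*$ yields $|f_{i_0}(\tilde{z}^*)| \leq C\, |f_j(\tilde{z}^*_j)|$ where $\tilde{z}^*_j \in \wt{P}(\tilde{x}_j, e^{-1}r_j)$ is the corresponding lift, so that
$$N(s) \;=\; |f_{i_0}(\tilde{z}^*)| \;\leq\; C e^{-(k+1)} N(s).$$
Choosing $k$ large enough that $C e^{-(k+1)} < 1$ forces $N(s) = 0$, hence $f_i \equiv 0$ on $\wt{P}(\tilde{x}_i, r_i)$ for every $i$, and then $s \equiv 0$ on $M$ since the $P(x_i, e^{-1}r_i)$ cover it.

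\textbf{Dimension count and main difficulty.} For such a fixed $k$, the linear map $H^0(M,L) \to \bigoplus_{i=1}^m J^k$ sending $s$ to the collection of order-$k$ jets $(j^k_{\tilde{x}_i} f_i)_{i=1}^m$ is injective by the above; each jet space has dimension $\binom{n+k}{k}$ (the monomials of degree $\leq k$ in $n$ variables), yielding $\dim H^0(M,L) \leq m \binom{n+k}{k}$. The genuinely orbifold-specific point is just that the polydiscs centered at the regular points $x_i$ embed into $M$, which is routine given the linear charts of Lemma \ref{Lem-carte-lineaire}. The technical heart is the Schwarz iteration; the main care lies in extracting the uniform constant $C$ from the finite cover so that a single value of $k$ works simultaneously for all $s$.
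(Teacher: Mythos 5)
Your argument is the same Andreotti-style Schwarz iteration that the paper uses, and the iteration itself (choose $k$ with $Ce^{-(k+1)}<1$, Schwarz lemma on the lifted polydiscs, compare trivializations at the point realizing $N(s)$, conclude $N(s)=0$, then count jets) is correct. The way you extract the comparison constant is slightly different: you appeal to a Hermitian metric $h^L$ and compactness, using the identity $|f_{i_0}(\tilde{z}^*)|\,|e_{i_0}(\tilde{z}^*)|_{h^L}=|s(z^*)|_{h^L}=|f_j(\tilde{z}^*_j)|\,|e_j(\tilde{z}^*_j)|_{h^L}$ to relate values in different lifts; the paper instead constructs explicit transition functions by passing through intermediate orbifold charts $(G_{V_{ij}},\wt{V}_{ij})$ around the intersections $\overline{P(x_i,r_i)}\cap\overline{P(x_j,r_j)}$ and bounds $|\tilde c_{ij}|$ on the compact sets $\wt K_{ij}$, also invoking the Shilov boundary to pick $j\neq q$. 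Your Hermitian-metric route is a perfectly legitimate, arguably cleaner, way to get the constant, since the $G$-invariance of $h^L$ and $G$-equivariance of the local lifts of $s$ ensure $|s|_{h^L}$ is a well-defined function on $M$, so the comparison between lifts in different charts is automatic.

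However, the opening claim is wrong and should be deleted: you cannot ``shrink the $r_i$'' (the covering $M\subset\bigcup P(x_i,e^{-1}r_i)$ is a hypothesis and would be destroyed), and the polydiscs $\wt P(\tilde x_i,2r_i)$ do \emph{not} embed into $M$ in general. The $x_i$ are regular, but the polydiscs around them must cover all of $M$, hence in particular $M_{\mathrm{sing}}$; any polydisc whose image meets the singular set necessarily fails to be an embedding. Correspondingly, the concluding remark that ``the genuinely orbifold-specific point is just that the polydiscs centered at the regular points embed into $M$'' misidentifies the difficulty. The real orbifold-specific point is exactly the one your Hermitian-metric argument handles implicitly: the two lifts $\tilde z^*\in\wt U_{y_{i_0}}$ and $\tilde z^*_j\in\wt U_{y_j}$ of the same $z^*\in M$ live in different charts related by restriction morphisms and group elements, and one must argue that the comparison between $|f_{i_0}(\tilde z^*)|$ and $|f_j(\tilde z^*_j)|$ is well defined and uniformly controlled. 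Your proof does this correctly via the descended function $|s|_{h^L}$; just remove the false ``identification'' with subsets of $M$ and state that the Schwarz lemma is applied upstairs in the $\wt P$'s, where $f_i$ is an honest holomorphic function.
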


\begin{proof}
 We first fix a trivialization of $\wt{L}_{U_{y_i}}$ over $\left[\wt{P}(\tilde{x}_i,2r_i)\right]$ for each $i$. 
 
 For $1\leq i,j\leq m$, let $K_{ij} := \ol{P(x_i,r_i)} \cap \ol{P(x_j,r_j)}$. We can cover $K_{ij}$ by finitely many disjoint orbifold charts  $(G_{V^\alpha_{ij}},\wt{V}^\alpha_{ij})$, $1\leq \alpha \leq A_{ij}$, with $V^\alpha_{ij}\subset U_{y_i}\cap U_{y_j}$. Let  $\wt{K}^\alpha_{ij}$ be the pre-image of $K_{ij}\cap V^\alpha_{ij}$ in $\wt{V}^\alpha_{ij}$. 
 
 For each $\alpha$, we choose an equivariant embedding $\varphi^\alpha_{ij\to i}\in \Phi_{V^\alpha_{ij},U_{y_i}}$ (resp. $\varphi^\alpha_{ij\to j}\in \Phi_{V^\alpha_{ij},U_{y_j}}$). Then the image of $\wt{K}^\alpha_{ij}$ is contained in $\left[\,\ol{\wt{P}(\tilde{x}_i,r_i)}\,\right]$ (resp. $\left[\,\ol{\wt{P}(\tilde{x}_j,r_j)}\,\right]$). This also induces a unique choice of compatible isomorphisms of $G_{V^\alpha_{ij}}$-equivariant bundles  $\varphi_{ij\to i}^{\alpha,L}\in \Phi_{V^\alpha_{ij},U_{y_i}}^L$ and $\varphi_{ij\to j}^{\alpha,L}\in \Phi_{V^\alpha_{ij},U_{y_j}}^L$.
 
 Then we can trivialize $\wt{L}_{V^{\alpha}_{ij}}$ either by composing the  isomorphism $\varphi_{ij\to i}^{\alpha,L}$ with the trivialization on $\left[\wt{P}(\tilde{x}_i,2r_i)\right]$, or by doing the same but replacing $i$ by $j$. This gives rise to $G_{V^\alpha_{ij}}$-invariant holomorphic transition functions:
 \begin{equation}
\tilde{c}^\alpha_{ij} \colon \wt{V}^\alpha_{ij} \to \C^*.
\end{equation}
Set 
\begin{equation}
C(L) = \sup \{|\tilde{c}^\alpha_{ij}(\tilde{x})|\}_{\tilde{x}\in \wt{K}^\alpha_{ij}, \, 1\leq \alpha \leq A_{ij}, \, 1\leq i,j \leq m }.
\end{equation}
As $\tilde{c}^{\alpha}_{ij}=(\tilde{c}^{\alpha}_{ji})^{-1}$, we have $C(L)\geq 1$. 

Set $k= \lfloor \log C(L)\rfloor +1 \in \N^*$, and consider $s\in H^0(M,L)$ vanishing at each $x_i$ up to order $k$. In the given trivialization, $s$ is covered for each $i$ by a $G_{y_i}$-equivariant function $\tilde{s}_i \colon \left[\wt{P}(x_i,2r_i)\right] \to \C$. Set
\begin{equation}
\| s\| = \sup \{ |\tilde{s}_i(\tilde{x})|\}_{\tilde{x}\in \left[\,\ol{\wt{P}(x_i,r_i)}\,\right],\, 1\leq i \leq m} = \sup \{ |\tilde{s}_i(\tilde{x})|\}_{\tilde{x}\in \ol{\wt{P}(x_i,r_i)},\, 1\leq i \leq m}.
\end{equation}
The last identity holds because $s_i$ is equivariant and a finite group acting linearly on $\C$ acts by isometries.

It is well-know that a holomorphic function $f$ on a neighborhood of a polydisc $P\subset \C^n$ attains its maximum on $\ol{P}$ at a point of the Shilov boundary of $P$. Thus, there exist $q\in \{1,\dots, m\}$ and $\tilde{t}_q\in \wt{S}(\tilde{x}_q,r_q)$ so that $|\tilde{s}_q(\tilde{t}_q)|=\|s\|$. We can find $j\neq q$ such that $t$, the image of $\tilde{t}_q$ in $M$, is in $P(x_j,e^{-1}r_j)$, and in particular $t\in K_{qj}$. 

Let $\alpha$ be such that $t\in K_{qj}\cap V^\alpha_{qj}$, and let $\tilde{t}^\alpha_{qj}$ be in the pre-image of $t$ in $\wt{V}^\alpha_{qj}$. Let $g\in G_{y_q}$ be such that $\tilde{t}_q=g\varphi^\alpha_{qj\to q}(\tilde{t}^\alpha_{qj})$. Set $\tilde{t}_j=\varphi^\alpha_{qj\to j}(\tilde{t}^\alpha_{qj})$, then, by the definition of $\tilde{c}^\alpha_{qj}$, we know that $\tilde{s}_q(g^{-1}\tilde{t}_q) = \tilde{c}^\alpha_{qj}(\tilde{t}^\alpha_{qj}) \tilde{s}_j(\tilde{t}_j)$. Hence,
\begin{equation}
\|s\| = |\tilde{s}_q(\tilde{t}_q)| = |\tilde{s}_q(g^{-1}\tilde{t}_q)| = |\tilde{c}^\alpha_{qj}(\tilde{t}^\alpha_{qj}) \tilde{s}_j(\tilde{t}_j)| \leq C(L)|\tilde{s}_j(\tilde{t}_j)|.
\end{equation}
Now, let $h\in G_{y_j}$ be such that $h\tilde{t}_j\in \wt{P}(\tilde{x}_j,e^{-1}r_j)$. Applying the Schwartz inequality (see for instance \cite[Problem 2.3]{ma-marinescu}) to $\tilde{s}_j$ on $\wt{P}(\tilde{x}_j,r_j)$ we get
\begin{equation}
|\tilde{s}_j(\tilde{t}_j)| =|\tilde{s}_j(h\tilde{t}_j)| \leq \|s\|. |h\tilde{t}_j|_0^k. r_j^{-k} \quad \text{where} \quad |(z_1,\dots,z_k)|_0= \sup\{|z_k|\}_{1\leq k \leq n}.
\end{equation}
Hence, as $|h\tilde{t}_j|_0\leq e^{-1}r_j$, we conclude:
\begin{equation}
\|s\| \leq \|s\| C(L)e^{-k},
\end{equation}
which implies that $s=0$ by the definition of $k$.

Finally, this proves that the map from $H^0(M,L)$ to the product of $m$ copies of the space $\C_k[X_1,\dots, X_n]$ of polynomial in $n$ variables and of degree $\leq k$, which associate to each section its $k$-jets at $x_1,\dots, x_m$, is injective. As $\dim \C_k[X_1,\dots, X_n] = \binom{n+k}{k}$, Lemma \ref{lem-before-Siegel} is proved.
  \end{proof}

\begin{theorem}[Siegel's lemma for orbifolds]
\label{thm-Siegel-orbifold}
  Let $M$ be a compact connected complex orbifold and let $L$ be a holomorphic orbifold line bundle on $M$. Then there exists $C>0$ such that for any $p\in \N^*$,
  \begin{equation}
\dim H^0(M,L^p) \leq C p^{\varrho_p}.
\end{equation}
\end{theorem}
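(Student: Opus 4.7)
The plan is to refine the jet-counting argument of Lemma \ref{lem-before-Siegel} by replacing the ambient dimension $n$ with the effective dimension $\varrho_p$ of the Kodaira map. First, I would fix once and for all a finite covering of $M$ by polydiscs $P(x_i, e^{-1}r_i)$ with $\wt{L}$ trivial on $\wt{P}(\tilde{x}_i, 2r_i)$, as in the hypotheses of Lemma \ref{lem-before-Siegel}. Since transition functions of $L^p$ are $p$-th powers of those of $L$, the associated constant satisfies $C(L^p) \leq C(L)^p$; setting $k_p := \lfloor p\log C(L)\rfloor + 1$, the proof of Lemma \ref{lem-before-Siegel} applied to $L^p$ shows that a section of $L^p$ vanishing to order $k_p$ at each center of the covering must vanish identically. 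The same conclusion holds if one replaces each $x_i$ by any nearby point $y_i^{(p)} \in P(x_i, r_i/2)$ (up to a harmless shrinking of radii), and I would exploit this freedom to choose $y_i^{(p)} \in M_{reg}\setminus \mathrm{BL}_{H^0(M,L^p)}$ satisfying $\mathrm{rk}_{y_i^{(p)}}\Phi_{L^p} = \varrho_p$, which is possible since the complement of this condition is a proper analytic subset of $M$.

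At each $y_i^{(p)}$, I would then select $\varrho_p+1$ sections $\sigma_0^{(i)},\ldots,\sigma_{\varrho_p}^{(i)} \in H^0(M,L^p)$ with $\sigma_0^{(i)}(y_i^{(p)}) \neq 0$ such that the map $\Psi_i := (\sigma_1^{(i)}/\sigma_0^{(i)},\ldots,\sigma_{\varrho_p}^{(i)}/\sigma_0^{(i)})$ has rank $\varrho_p$ at $y_i^{(p)}$. The crucial observation is that the image $Y_p \subset \mathbb{P}^{d_p-1}$ of $\Phi_{L^p}$ is an analytic subvariety of dimension $\varrho_p$, smooth near $\Phi_{L^p}(y_i^{(p)})$, on which the ratios $\sigma_j^{(i)}/\sigma_0^{(i)}$ provide local coordinates. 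Hence for any other $s \in H^0(M,L^p)$, the ratio $s/\sigma_0^{(i)}$ factors as $F_s\circ \Psi_i$ near $y_i^{(p)}$ for some germ of holomorphic function $F_s$ in $\varrho_p$ variables, so its $k_p$-jet at $y_i^{(p)}$ is determined by the $k_p$-jet of $F_s$, which lies in a space of dimension $\binom{\varrho_p+k_p}{k_p}$.

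Combining the refined version of Lemma \ref{lem-before-Siegel} with this factorization, the linear map sending $s \in H^0(M,L^p)$ to the tuple (over $i$) of $k_p$-jets of $s/\sigma_0^{(i)}$ at $y_i^{(p)}$ is injective, whence
\begin{equation*}
\dim H^0(M,L^p) \leq m\binom{\varrho_p+k_p}{k_p} \leq C\,p^{\varrho_p},
\end{equation*}
with $C$ depending only on $m$, $n$ and $C(L)$ but not on $p$.

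The main obstacle is the factorization step: one must check rigorously that $s/\sigma_0^{(i)}$ is a holomorphic function of $\Psi_i$ near the chosen generic point. This reduces to the facts that the image of $\Phi_{L^p}$ is pure $\varrho_p$-dimensional and smooth at $\Phi_{L^p}(y_i^{(p)})$ (since $y_i^{(p)}$ is a point of maximal rank) and that $\Psi_i$ yields a local chart on this image; both follow from the proper mapping theorem applied to $\Phi_{L^p}$ (after suitably blowing up the base locus if needed). The orbifold setting itself poses no additional difficulty beyond what was handled in Lemma \ref{lem-before-Siegel}: the sections $\sigma_j^{(i)}$ are lifted equivariantly to the charts, and $G$-invariance is preserved by taking ratios and Taylor expansions.
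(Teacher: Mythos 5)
Your proposal is essentially correct and follows the same jet-counting strategy as the paper: apply the bound from Lemma \ref{lem-before-Siegel} with $k_p\sim p\log C(L)$, and then reduce the jet space from $n$ variables to $\varrho_p$ variables by exploiting that $s/\sigma_0$ factors through the Kodaira map, yielding $\dim H^0(M,L^p)\leq m\binom{\varrho_p+k_p}{k_p}=O(p^{\varrho_p})$.

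Two points where the paper's version is cleaner and where your proposal creates avoidable complications. First, you choose base points $y_i^{(p)}$ depending on $p$ and then need to re-derive Lemma \ref{lem-before-Siegel} with perturbed polydiscs, which forces a discussion of covering and of the dependence of the constant $C(L)$ on the centers. The paper avoids this entirely: since the rank of $\Phi_{L^p}$ drops only on a proper analytic (hence nowhere dense) subset, a Baire-category argument gives a dense set of points where $\mathrm{rk}_x\Phi_{L^p}=\varrho_p$ simultaneously for \emph{all} $p$; the $x_i$ are then chosen once and for all, and Lemma \ref{lem-before-Siegel} is applied verbatim to $L^p$ with $C(L^p)=C(L)^p$. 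Second, to justify the factorization $s/\sigma_0=F_s\circ\Psi_i$ you appeal to the proper mapping theorem and to the global structure of the image variety $Y_p$; this is much more than is needed. The paper (in effect) uses only the constant rank theorem: since $\Phi_{L^p}$ has locally constant rank $\varrho_p$ near $x_i$, one picks a $\varrho_p$-dimensional submanifold $M_{p,i}$ transversal to the fiber $\Phi_{L^p}^{-1}(\Phi_{L^p}(x_i))$; as $s/\sigma_0$ is constant along the fibers, vanishing to order $k_p$ along $M_{p,i}$ forces vanishing to order $k_p$ at $x_i$ in all directions. This gives the injectivity of the jet-along-$M_{p,i}$ map without any global analytic-geometry input, and the bound $m\binom{\varrho_p+k_p}{k_p}$ follows as you state.
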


\begin{proof}
 We modify the proof of Lemma \ref{lem-before-Siegel} and we will use the same notations as there. The set of points where $\Phi_{L^p}$ has rank less than $\varrho_p$ is a proper analytic set of $M$. As a consequence, the set $\{x \in M \: : \: \forall \, p\in \N^*, \, \mathrm{rk}_x \Phi_{L^p} = \varrho_p \}$ is dense in $M$ and we can choose $(x_1,\dots ,x_m)\in M_{reg}$  as in Lemma \ref{lem-before-Siegel} such that $\Phi_{L^p}$ has rank $\varrho_p$ at $x_i$ for any $p\in \N^*$. 

Since $\Phi_{L^p}$ has constant rank near $x_i$, there exists a $\varrho_p$-dimensional submanifold $M_{p,i}$ in a neighborhood of $x_i$ which is transversal at $x_j$ to the fiber $\Phi_{L^p}^{-1}(\Phi_{L^p}(x_i))$. Now, if $s\in H^0(M,L^p)$ vanishes up to order $k_p=p(\lfloor \log C(L)\rfloor +1)$ at each $x_i$ along $M_{p,i}$, then it also does along $\Phi_{L^p}^{-1}(\Phi_{L^p}(x_i))$, and thus it vanishes up to order $k_p$ at each $x_i$ (on $M$). With the same reasoning as in Lemma \ref{lem-before-Siegel} we get that $\|s\|\leq \|s\| C(L^p)e^{-k_p}$, and since $C(L^p)=C(L)^p$, we find that $s=0$.

Finally, as in Lemma \ref{lem-before-Siegel}, we have proved that the map associating to each $s\in H^0(M,L^p)$ its $k$-jets along $M_{p,i}$ for $1\leq i\leq m$ is injective. In particular, $\dim H^0(M,L^p) \leq m\binom{\varrho_p+k_p}{k_p}$, which implies Theorem \ref{thm-Siegel-orbifold}.
  \end{proof}

We can now turn back to the characterisation of big line bundles.

\begin{proof}[Proof of Theorem \ref{thm-chara-big-line-bundle}]
 Clearly, by Theorem \ref{thm-Siegel-orbifold}, we get that if $\displaystyle{\limsup_{p\to \infty}} \,p^{-n} \dim H^0(X,L^p) >0$ then $L$ is big.
 
 Conversely, if $L$ is big, there are $m\in \N^*$ and $x_0\in M_{reg}\setminus \mathrm{BL}_{H^0(M,L^m)} $ such that $\mathrm{rk}_{x_0}\Phi_{L^m} = n$. Thus, there are $s_0,\dots,s_n \in H^0(M,L^m)$ such that $s_0(x_0)\neq 0$ and $d(\frac{s_1}{s_0})_{x_0}\wedge \dots \wedge d(\frac{s_n}{s_0})_{x_0} \neq 0$. Hence, $(\frac{s_1}{s_0}(x), \dots , \frac{s_n}{s_0}(x))$ are local coordinates near $x_0$.
 
 Therefor, if a polynomial $P$ of degree $p$ in $n$ variables is such that $P(\frac{s_1}{s_0}, \dots , \frac{s_n}{s_0})=0$, then $P=0$. In particular, if $Q$ is a non-zero homogeneous polynomial of degree $p$ in $n+1$ variables, then $Q(s_0,\dots,s_n)\in H^0(M,L^{rm})$ is also non-zero. In deed if it was, then $P(X_1,\dots,X_n):=Q(1,X_1,\dots,X_n)\neq 0$ will satisfy
 $$P\left(\frac{s_1}{s_0}, \dots , \frac{s_n}{s_0}\right) = \frac{1}{s_0^r}Q(s_0,\dots,s_n)=0, $$
a contradiction. As the space of homogeneous polynomials of degree $p$ in $n+1$ variables has dimension $\binom{n+p}{p} \geq p^n/n!$, we deduce that $\dim H^0(M,L^{mp}) \geq p^n/n!$ and thus 
$$\displaystyle{\limsup_{p\to \infty}} \,p^{-n} \dim H^0(X,L^p) \geq 1/n! >0.$$
Theorem \ref{thm-chara-big-line-bundle} is proved.
  \end{proof}

\subsection{A criterion for Moishezon orbifolds}

\begin{lemma}
\label{lem-big-and-Moishezon}
  Let $M$ be a compact connected complex orbifold. If $M$ carries a big line bundle, then $M$ is Moishezon.
\end{lemma}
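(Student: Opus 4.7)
My plan is to exploit the geometric content of bigness already extracted in the proof of Theorem~\ref{thm-chara-big-line-bundle}. If $L$ is big, that proof produces an integer $m\geq 1$, a regular point $x_0 \in M_{reg}\setminus \mathrm{BL}_{H^0(M,L^m)}$, and sections $s_0, s_1,\ldots, s_n \in H^0(M,L^m)$ such that $s_0(x_0)\neq 0$ and the differentials $d(s_1/s_0)_{x_0},\ldots, d(s_n/s_0)_{x_0}$ are linearly independent. I will produce $n = \dim M$ algebraically independent elements of $\M(M)$ from the ratios $f_i := s_i/s_0$, and then conclude with Cartan--Serre ($a(M)\leq \dim M$) that $a(M)=n$.

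The first substantive step is to check that each $f_i$ genuinely defines an orbifold meromorphic function on $M$, i.e.\ an element of $\M(M)$ in the sense of Section~\ref{sect-def-Moishezon}. Locally around any point $y \in M$, pick an orbifold chart $(G_y,\wt V_y)$ and a (not necessarily equivariant) local trivialization of $\wt{L^m}$; then the lifts $\tilde s_i, \tilde s_0$ are holomorphic functions on $\wt V_y$ satisfying the same character transformation law $\tilde s(\tau_g z) = \chi(g)\tilde s(z)$ (the character depending only on $L^m$ and on the trivialization, not on the section). It follows that $\tilde f_i := \tilde s_i/\tilde s_0$ is $G_y$-invariant as a meromorphic function. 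To exhibit it as a quotient $\tilde g/\tilde h$ of $G_y$-invariant \emph{holomorphic} functions with $\tilde h\not\equiv 0$, I set
\begin{equation*}
\tilde h := \prod_{g\in G_y} \tilde s_0\circ \tau_g, \qquad \tilde g := \tilde s_i\cdot\prod_{g\in G_y,\, g\neq 1}\tilde s_0\circ\tau_g,
\end{equation*}
and a direct computation using the character law shows both are $G_y$-invariant; since $s_0(x_0)\neq 0$, $\tilde h$ does not vanish identically on the connected component containing $y$, so $\tilde h \in \mathscr{S}_M(V_y)$ and $f_i\in \M(V_y)$. Globality is immediate since this construction is done in every chart. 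This is the step I expect to be the main obstacle, since it is where the orbifold structure intrudes nontrivially and requires one to unpack the sheaf-theoretic definition of $\M$.

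Once the $f_i$ are in $\M(M)$, algebraic independence is easy: suppose $P \in \C[z_1,\ldots,z_n]$ satisfies $P(f_1,\ldots,f_n)=0$. This identity of meromorphic functions, pulled back to the chart around $x_0\in M_{reg}$, becomes $P(\tilde f_1,\ldots,\tilde f_n)\equiv 0$ in a neighborhood of $x_0$. The condition $s_0(x_0)\neq 0$ combined with $d\tilde f_1\wedge\cdots\wedge d\tilde f_n|_{x_0}\neq 0$ means $(\tilde f_1,\ldots,\tilde f_n)$ form a holomorphic chart around $x_0$, and therefore $P$ vanishes identically on an open set of $\C^n$, forcing $P=0$.

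Combining: $a(M)\geq n$. The bound $a(M)\leq \dim M = n$ recalled in Section~\ref{sect-def-Moishezon} (via Cartan--Serre) then gives $a(M)=\dim M$, which is the definition of Moishezon. This closes the lemma.
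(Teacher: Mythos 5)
Your argument follows the same route as the paper's proof: extract $m$, a regular point $x_0$, and sections $s_0,\dots,s_n\in H^0(M,L^m)$ from bigness, note that $s_1/s_0,\dots,s_n/s_0$ form local coordinates near $x_0\in M_{reg}$, deduce algebraic independence of these ratios, and conclude via $a(M)\le\dim M$. The only genuine addition is your explicit check that each $s_i/s_0$ lies in $\M(M)$ by multiplying numerator and denominator by $\prod_{g\neq 1}\tilde s_0\circ\tau_g$ to obtain $G_y$-invariant holomorphic representatives; the paper takes this point for granted, and your verification is correct and worth having. One minor imprecision: in a general (not necessarily equivariant) local trivialization of $\wt{L^m}$, the transformation law is $\tilde s(\tau_g z)=c_g(z)\tilde s(z)$ for a cocycle $c_g$ that may depend on $z$, not just a character of $G_y$; this does not affect your computation, since the invariance of $\tilde g$ and $\tilde h$ follows by reindexing the products together with the (already established) $G_y$-invariance of $\tilde s_i/\tilde s_0$, using only that $c_g$ is independent of the section.
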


\begin{proof}
 If $L$ is a big line bundle on $M$, there are $m\in \N^*$ and $x_0\in M_{reg}\setminus \mathrm{BL}_{H^0(M,L^m)} $ such that $\mathrm{rk}_{x_0}\Phi_{L^m} = n$. Thus, there are $s_0,\dots,s_n \in H^0(M,L^m)$ such that $s_0(x_0)\neq 0$ and $d(\frac{s_1}{s_0})_{x_0}\wedge \dots \wedge d(\frac{s_n}{s_0})_{x_0} \neq 0$. Hence, $(\frac{s_1}{s_0}(x), \dots , \frac{s_n}{s_0}(x))$ are local coordinates near $x_0$. Therefor, if a polynomial $P$ in $n$ variables is such that $P(\frac{s_1}{s_0}, \dots , \frac{s_n}{s_0})=0$, then $P=0$. Thus, the meromorphic functions $\frac{s_1}{s_0}, \dots , \frac{s_n}{s_0}$ are algebraically independent, so that $a(M)\geq n$ and $M$ is Moishezon.
 \end{proof}

\begin{remark}
 In the case of a regular compact connected complex manifold, it is in fact equivalent to be Moishezon and to carry a big line bundle (see for instance \cite[Theorem 2.2.15]{ma-marinescu}), but in the singular case, the proof cannot be directly adapted and to the knowledge of the author it is not known wether it is also an equivalence.
\end{remark}

We can now prove the criterion stated in Theorem \ref{thm-criterion} in the introduction of this paper.
\begin{proof}[Proof of Theorem \ref{thm-criterion}]
 First, observe that if $(L,h^L)$ is semi-positive, then $X(1)=\emptyset$ and if moreover $(L,h^L)$ is positive at one point then $X(0)\neq \emptyset $ so that $\int_{M(\leq 1)} \left(\frac{\ic}{2\pi} R^L\right)^n = \int_{M( 0)} \left(\frac{\ic}{2\pi} R^L\right)^n$ is positive. Thus \emph{(i)} implies \emph{(ii)}. We will now prove that \emph{(ii)} implies that $M$ is Moishezon.
 
 If we apply Theorem \ref{OHMI} for $q=1$, we find
 \begin{equation}
\dim H^0(M,L^p) \geq \frac{p^n}{n!} \int_{M(\leq 1)} \left(\frac{\ic}{2\pi} R^L\right)^n + o(p^n),
\end{equation}
and thus with the hypothesis \eqref{eq-criterion} and Theorem \ref{thm-chara-big-line-bundle} we find that $L$ is big. By Lemma \ref{lem-big-and-Moishezon}, we find that $M$ is Moishezon. 
 \end{proof}

%%%%%%%%%%%%%%%%%%%%%%%%%%%%%%%%%%%%%%%%%%%%%%%%%%%%%%%%%%%%%%%%%%%%%%%%%%%%%%%%%%%%%%%%%%%%%%%%%%%%%%%%%%%%%%%%%%%%%%%%%%%%%%%%%%%%%%%%%%%%%%%%%%%%%%%%%%%%%%%%%%%%%%%%%%%%%%%%%%%%%%%%%%%%%%%%%%%%%%%%%%%%%%%%%%%%%%%%%%%%%%%%%%%%%%%%%%%%%%%%%%%%%%%%%%%%%%%%%%%%%%%%%%%%%%%%%%%%%%%%%%%%%%%

% BibTeX users please use one of
%\bibliographystyle{spbasic}      % basic style, author-year citations
\bibliographystyle{spmpsci}      % mathematics and physical sciences
%\bibliographystyle{spphys}       % APS-like style for physics
%\bibliography{}   % name your BibTeX data base

% Non-BibTeX users please use

\end{document}